\newtheorem{theorem}{Theorem}[section]
\newtheorem{lemma}[theorem]{Lemma}
\newtheorem{proposition}[theorem]{Proposition}
\newtheorem{problem}[theorem]{Problem}
\newtheorem{remark}[theorem]{Remark}
\theoremstyle{definition}
\theoremstyle{remark}
\newcommand{\R}{\mathbb{R}}
\newcommand{\C}{\mathbb{C}}
\newcommand{\Z}{\mathbb{Z}}
\newcommand{\Pc}{\mathbb{P}(\mathbb{C}^2)}
\renewcommand{\S}{\mathbb{S}^2}
\newcommand{\CH}{\ \mathcal{H}_N}
\newcommand{\dist}{\operatorname{dist}}
\title{A sequence of polynomials with optimal condition number}
\author{Carlos Beltrán, Ujué Etayo, Jordi Marzo and Joaquim Ortega-Cerdà}
\thanks{Carlos Beltrán and Ujué Etayo were partially  supported by Ministerio de 
Economía y Competitividad, Gobierno de España, through grants MTM2017-83816-P,  
MTM2017-90682-REDT, and by the Banco de Santander and Universidad de Cantabria 
grant 21.SI01.64658. Joaquim Ortega-Cerdà and Jordi Marzo have
been partially supported by grant MTM2017-83499-P by the 
Ministerio de Econom\'{\i}a y Competitividad, Gobierno de Espa\~na and by the 
Generalitat de Catalunya (project 2017 SGR 358).}
\begin{document}
\begin{abstract}
We find an explicit sequence of univariate polynomials of arbitrary degree with 
optimal condition number. This solves a problem posed by Michael Shub and 
Stephen Smale in 1993.
\end{abstract}
\maketitle

\section{Introduction}
\subsection{The Weyl norm and the condition number of polynomials}

Closely following the notation of the celebrated paper \cite{BezIII}, we denote 
by $\CH$ the vector space of bivariate homogeneous polynomials of degree $N$, 
that is the set of polynomials of the form
\begin{equation}\label{eq:polyg}
g(x,y)=\sum_{i=0}^Na_i x^iy^{N-i},\quad a_i\in\C
\end{equation}
where $x,y$ are complex variables. The Weyl norm of $g$  (sometimes called 
Kostlan or Bombieri-Weyl or Bombieri norm) is
\[
\|g\|=\left(\sum_{i=0}^N\binom{N}{i}^{-1}|a_i|^2\right)^{1/2},
\]
where the binomial coefficients in this definition are introduced to satisfy 
the property $\|g\|=\|g\circ U\|$ where $U\subseteq\C^{2\times 2}$ is any 
unitary $2\times 2$ matrix and $g\circ U\in\CH$ is the polynomial given by 
$g\circ U(x,y)=g(U\binom{x}{y})$. Indeed, with this metric we have
\[
\|g\|^2=\frac{N+1}{\pi}\int_{ \Pc}\frac{|g(\eta)|^2}{\|\eta\|^{2N}}\,dV(
\eta),
\]
where the integration is made with respect to volume form $V$ 
arising from the standard Riemannian structure in $\Pc$. Note that the 
expression inside the integral is well defined since it does not depend on the 
choice of the representative of $\eta\in\Pc$.

The zeros of $g$ lie naturally in the complex projective space $\Pc$. The 
condition number of $g$ at a zero $\zeta$ is defined as follows. If the 
derivative $Dg(\zeta)$ does not vanish, by the Implicit Function Theorem the 
zero $\zeta$ of $g$ can be continued in a unique differentiable manner to a zero 
$\zeta'$ of any sufficiently close polynomial $g'$. This thus defines (locally) 
a {\em solution map} given by $Sol(g')= \zeta'$. The condition number is by 
definition the operator norm of the derivative of the solution map, in other 
words $\mu(g,\zeta)=\|D Sol(g,\zeta)\|$, where the tangent spaces $T_g\CH$ and 
$T_\zeta\Pc$ are endowed respectively with the Bombieri--Weyl norm and the 
Fubini--Study metric. In \cite{BezI} it was proved that
\begin{equation}\label{def:mu}
\mu(g,\zeta)=\|g\|\,\|\zeta\|^{N-1}|(Dg(\zeta)\mid_{\zeta^{\perp}})^{-1}|,
\end{equation}
(the definition and theory in \cite{BezI} applies to the more general case of 
polynomial systems). Here, $Dg(\zeta)$ is just the derivative
\[
Dg(\zeta)=\left(\frac{\partial}{\partial x} g(x,y) \quad  
\frac{\partial}{\partial y} g(x,y)\right)_{(x,y)=\zeta}
\]
and $Dg(\zeta)\mid_{\zeta^{\perp}}$ is the restriction of this derivative to 
the orthogonal complement of $\zeta$ in $\C^2$. If this restriction is not 
invertible, which corresponds to $\zeta$ being a double root of $g$, then by 
definition $\mu(g,\zeta)=\infty$.

Shub and Smale also introduced a normalized version of the condition number 
since it turns out to produce more beautiful formulas in the later development 
of the theory (very remarkably in the extension to polynomial systems), see for 
example \cite{BlCuShSm} or \cite{Condition}. In the case of polynomials it is 
simply defined by
\begin{equation}\label{def:munorm}
\mu_{\rm 
norm}(g,\zeta)=\sqrt{N}\,\mu(g,\zeta)=\sqrt{N}\,\|g\|\,\|\zeta\|^{N-1}|(Dg(\zeta)\mid_{\zeta^{\perp}})^
{-1}|.
\end{equation}
The normalized condition number of $g$ (without reference to a particular 
zero) is defined by
\[
\mu_{\rm norm}(g)=\max_{\zeta\in \Pc:g(\zeta)=0}\mu_{\rm norm}(g,\zeta).
\]
Now, given a univariate degree $N$ complex polynomial 
$P(z)=\sum_{i=0}^Na_iz^i$, it has a homogeneous counterpart 
$g(x,y)=\sum_{i=0}^Na_i x^iy^{N-i}$. The condition number and the Weyl norm of 
$p$ are defined via its homogenized version:
\[
\|P\|=\|g\|,\quad \mu_{\rm norm}(P,z)=\mu_{\rm norm}(g,(z,1)),
\]
\[
\mu_{\rm norm}(P)=\mu_{\rm norm}(g)=\max_{z\in \C:P(z)=0}\mu_{\rm norm}(P,z).
\]
A simple expression for the condition number of a univariate polynomial (see 
for example \cite{Facility}) is:
\begin{equation}\label{eq:mucomp}
\mu_{\rm norm}(P,z)=N^{1/2}\frac{\|P\| (1+|z|^2)^{N/2-1}}{|P'(z)|},
\end{equation}
and we have $\mu_{\rm norm}(P,z)=\infty$ if and only if $z$ is a double zero of 
$P$. For example, the condition number of the polynomial $z^N-1$ is equal at all 
of its zeros and
\begin{equation}\label{eq:aicesunidad}
\mu_{\rm 
norm}(z^N-1)=N^{1/2}\frac{\|z^N-1\|2^{N/2-1}}{N}=\frac{2^{N/2-1/2}}{\sqrt{N}}.
\end{equation}
(Note that the same computation gives a slightly different result in 
\cite[p. 7]{BezIII}; the correct quantity is \eqref{eq:aicesunidad}).

\subsection{The problem of finding a sequence of well--conditioned polynomials}
In \cite{BezII} it was proved that, if $P$ is uniformly chosen in the unit 
sphere of $\CH$ (i.e. the set of polynomials of unit Weyl norm, endowed with the 
probability measure corresponding to the metric inherited from $\CH$) then 
$\mu_{\rm norm}(P)$ is smaller than $N$ with probability at least $1/2$. Indeed, 
as pointed out in \cite{BezIII}, with positive probability a polynomial of 
degree $N$ with $\mu_{\rm norm}(P)\leq N^{3/4}$ can be found. In other words, 
there exist plenty of degree $N$ polynomials with rather small condition number.

Indeed, the least value that $\mu_{\rm norm}$ can attain for a degree $N$ 
polynomial seems to be unknown. We prove in Section \ref{sec:lowerbound} the 
following lemma.
\begin{lemma}\label{lem:lb}
	There is a universal constant $C$ such that $
	\mu_{\rm norm}(P)\geq C\sqrt{N}$ for every degree $N$ polynomial $P$.
\end{lemma}
Despite the existence of well--conditioned polynomials of all degrees, 
explicitly describing such a sequence of polynomials was proved to be a 
difficult 
task, which lead to the following:
\begin{problem}[Main Problem in \cite{BezIII}]\label{problem:main}
Find explicitly a family of polynomials $P_N$ of degree $N$ with $\mu_{\rm 
norm}(P_N)\leq 
N$.
\end{problem}
By ``find explicitly'' Shub and Smale meant ``giving a handy description'' or 
more formally describing a polynomial time machine in the BSS model of 
computation describing $P_N$ as a function of $N$. Indeed, Shub and Smale 
pointed out that it is already difficult to describe a family such that 
$\mu_{\rm norm}(P_N)\leq N^k$ for any fixed constant $k$, say $k=100$. Despite 
the existence of many well conditioned polynomials, we cannot even find one! 
This fact was recalled by Michael Shub in his plenary talk at the FoCM 2014 
conference where he referred to the problem as {\em finding hay in the 
haystack}.

One of the reasons that lead Shub and Smale to pose the question above was the 
possible impact on the design of efficient algorithms for solving polynomial 
equations. In short, a homotopy method to solve a target polynomial $P_1$ will 
start by choosing another polynomial of the same degree $P_0$ all of whose roots 
are known and will try to follow closely the path of solutions of the polynomial 
segment $P_t=(1-t) P_0+t P_1$. Shub and Smale noticed that if $P_0$ has a large 
condition number then the resulting algorithm will be unstable, thus the 
interest in finding an explicit expression for some well--conditioned sequence. 
The reverse claim (that a well conditioned polynomial will produce efficient and 
stable algorithms) is quite nontrivial, yet true: it was proved in \cite{BuCu} 
that if $P_0$ has a condition number which is bounded by a polynomial in $N$ 
then the total expected complexity of a carefully designed homotopy method is 
polynomial in $N$ for random inputs. The question of finding a good starting 
pair for the homotopy (which is the core of Smale's 17th problem \cite{Smale}) 
has actually been solved by other means even in the polynomial system case, see 
\cite{FLH,BuCu,Lairez} that solve Smale's 17th problem and subsequent papers 
which improve on these results. Yet, Problem \ref{problem:main} remained 
unsolved. It was also included as Problem 12 in \cite[Chpt: Open 
Problems]{Condition}, and there were several unsuccesful attempts to solve it 
via some particular constructions of polynomials that seemed to behave well, but 
only numerical data was produced. 

\subsection{Relation to spherical points and 
Smale's 7th problem}

Given a point $z\in\mathbb C$ we denote by $\hat z$ the point in $\S = 
\{(a,b,c)\in\R^3: \ a^2+b^2+c^2 = 1\}$ 
obtained from the stereographic projection.
That is if we denote $\hat z = (a, b, c)$ then $z = 
(a+ib)/(1-c)$ and conversely 
\[
a = \frac{z+\bar z}{1+|z|^2},\qquad
b =  \frac{z-\bar z}{i(1+|z|^2)},\qquad
c =  \frac{|z|^2-1}{1+|z|^2}.
\]
Given $P(z) = \prod_{i = 1}^N (z-z_i)$ we 
consider the continuous function $\hat P: \S\to \mathbb R$ defined as
$\hat P(x) = \prod_{i = 1}^N |x-\hat z_i|$. Moreover for any given zero 
$\zeta$ of $P$ we define $\hat P_\zeta(x) = \hat P(x)/|x-\hat\zeta|$, 
that in the case $x=\zeta=\hat z_i$ for some $i$ simply means
\[
\hat P_{z_i}(\hat z_i)=\prod_{j\neq i}|\hat z_i-\hat z_j|.
\]
With 
this 
notation, \cite{BezIII}*{Proposition~2} claims that
\begin{equation}\label{def:workingmu}
\mu_{\rm norm}(P, \zeta) = \frac12\sqrt{N(N+1)}\frac{\|\hat P\|_{L^2(d\sigma)}}{\hat P_\zeta(\hat\zeta)},
\end{equation}
where $d\sigma$ is the sphere surface measure, normalized to satisfy 
$\sigma(\S)=1$ (note that in \cite{BezIII}*{Proposition~2} the sphere is the 
Riemann sphere which has radius $1/2$; we present the result here adapted to the 
unit sphere $\S$). In other words, we have
\begin{equation}\label{def:workingmu2}
\mu_{\rm norm}(P) =\frac12\sqrt{N(N+1)}\max_{1\leq i\leq N} 
\frac{\left(\int_{\S}\prod_{j=1}^N| p-\hat 
z_j|^2d\sigma(p)\right)^{1/2}}{\prod_{j\neq i}|\hat z_i-\hat z_j|}.
\end{equation}
Now we describe the main result in \cite{BezIII}. For a set of points 
$\hat z_1,\ldots,\hat z_N$ in the unit sphere $\S\subseteq\R^3$, we define the 
logarithmic 
energy of these points as
\[
\mathcal{E}(\hat z_1,\ldots,\hat z_N)=\sum_{i\neq j}\log\frac{1}{|\hat z_i-\hat 
z_j|}
\]
(note that in \cite{BezIII} the sum is taken over $i< j$ instead of 
$i\neq j$, which is equivalent to dividing $\mathcal{E}$ by $2$. Here we follow 
the notation in most of the current works in the area). Let
\[
\mathcal{E}_N=\min_{\hat z_1,\ldots,\hat z_N\in\S}\mathcal{E}(\hat 
z_1,\ldots,\hat z_N).
\]
\begin{theorem}[Main result of \cite{BezIII}]\label{th:bez3}
Let $\hat z_1,\ldots,\hat z_N\in\S$ be such that
\[
\mathcal{E}(\hat z_1,\ldots,\hat z_N)\leq \mathcal{E}_N+c\log N.
\]
Let $z_1,\ldots,z_N$ be points in $\mathbb C$ by the inverse stereographic 
projection. Then, the 
polynomial 
$P(z) = \prod_{i=1}^N (z-z_i)$ with zeros $z_1,\ldots, z_N$ satisfies 
$\mu_{\rm norm}(P)\leq 
\sqrt{N^{1+c}(N+1)}$.
\end{theorem}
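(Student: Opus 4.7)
The plan is to start from the identity~(\ref{def:workingmu2}), which expresses
\[
\mu_{\rm norm}(P)^2 \;=\; \frac{N(N+1)}{4}\,\max_{1\le i\le N}\frac{\int_{\S} \hat P(p)^2 \, d\sigma(p)}{\prod_{j \neq i} |\hat z_i - \hat z_j|^2},
\]
so the theorem reduces to showing that this maximum is at most $4 N^{c}$. I would not try to estimate the numerator and the denominator separately; instead the aim is to prove the much stronger pointwise inequality
\[
\hat P(p)^2 \;\leq\; 4\,N^{c}\,\prod_{j\neq i}|\hat z_i - \hat z_j|^2
\qquad (p\in\S,\ 1\le i\le N),
\]
after which the ratio is immediately controlled by $4 N^{c}$ upon choosing $i$ to minimise the right-hand side and then integrating against $d\sigma$.

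To obtain the pointwise bound, introduce for each $i$ the potential $U_i(p)=\sum_{j\neq i}\log(1/|p-\hat z_j|)$, so that $U_i(\hat z_i)=E_i:=\sum_{j\neq i}\log(1/|\hat z_i-\hat z_j|)$. The key step is a competitor argument: for any $q\in\S$, the $N$-point configuration $\{\hat z_j:j\neq i\}\cup\{q\}$ has logarithmic energy at least $\mathcal{E}_N$, and removing $\hat z_i$ subtracts exactly $2E_i$ from $\mathcal{E}(\hat z_1,\ldots,\hat z_N)$, so
\[
\mathcal{E}_N \;\leq\; \mathcal{E}(\hat z_1,\ldots,\hat z_N)-2E_i+2U_i(q) \;\leq\; \mathcal{E}_N + c\log N - 2E_i + 2U_i(q).
\]
Rearranging and taking $q$ to be the global minimiser shows $U_i(p)\geq E_i-\tfrac{c}{2}\log N$ for every $p\in\S$, which after exponentiation gives $\prod_{j\neq i}|p-\hat z_j|\leq N^{c/2}\prod_{j\neq i}|\hat z_i-\hat z_j|$. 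Multiplying by the trivial estimate $|p-\hat z_i|\leq 2$ (the diameter of $\S$) yields the displayed pointwise bound.

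With this in hand the rest is formal: integrating the pointwise inequality over $p$ and then dividing by $\prod_{j\neq i}|\hat z_i-\hat z_j|^2$ for the index $i$ that minimises this product bounds the maximum in~(\ref{def:workingmu2}) by $4N^{c}$, and substituting back produces $\mu_{\rm norm}(P)\leq\sqrt{N^{1+c}(N+1)}$. I expect the main conceptual obstacle to be realising that one should seek an $L^{\infty}$ rather than an $L^{2}$ estimate on $\hat P$: once that is recognised, the competitor manipulation is short, costs only the $\tfrac{c}{2}\log N$ that the energy slack affords, and avoids any Jensen-type loss that would otherwise spoil the exponent of $N$.
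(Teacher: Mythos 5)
Your argument is correct: the competitor step (replacing $\hat z_i$ by an arbitrary $q$ changes the ordered-pair energy by exactly $-2E_i+2U_i(q)$, which must be at least $-c\log N$) gives the pointwise bound $\prod_{j\neq i}|p-\hat z_j|\le N^{c/2}\prod_{j\neq i}|\hat z_i-\hat z_j|$, and combining it with $|p-\hat z_i|\le 2$ and the normalized measure in \eqref{def:workingmu2} yields precisely $\mu_{\rm norm}(P)\le\sqrt{N^{1+c}(N+1)}$. The paper itself states Theorem~\ref{th:bez3} as a quoted result from \cite{BezIII} without reproving it, and your variational/competitor argument is essentially the original Shub--Smale proof, so there is nothing to compare beyond confirming its correctness.
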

Theorem~\ref{th:bez3} shows that if one can find $N$ points in the sphere such 
that their logarithmic potential is very close to the minimum then one can  
construct a solution to (the polynomial version of) Problem~\ref{problem:main}. 
Actually, this fact is the reason for the exact form of the problem posed by 
Shub and Smale that is nowadays known as Problem number 7 in Smale's list 
\cite{Smale}:
\begin{problem}[Smale's 7th problem]\label{smale7}
Can one find $\hat z_1,\ldots,\hat z_N\in \S$ such that $\mathcal{E}(\hat 
z_1,\ldots,\hat z_N)\leq \mathcal{E}_N+c\log 
N$ for some universal constant $c$?
\end{problem}
The value of $\mathcal{E}_N$ is not sufficiently well understood. Upper and 
lower bounds were given in \cite{Wagner,RSZ94,Dubickas,Brauchart2008}, and the 
last word is \cite{BS18} where this value is related to the minimum renormalized 
energy introduced in \cite{SS12} proving the existence of a term $C_{\log}\,N$ in 
the assymptotic expansion. The current knowledge is:
\begin{equation}\label{eq:as}
\mathcal{E}_N=\kappa\,N^2-\frac12\,N\log N+C_{\log}\,N+o(N),
\end{equation}
where $C_{\log}$ is a constant and
\begin{equation}\label{conjetura}
\kappa=\int_{\S}\int_{\S}\log |x-y |^{-1}\,d\sigma(x)d\sigma(y)=\frac{1}{2}-\log 2<0
\end{equation}
is the continuous energy. Combining  \cite{Dubickas} with \cite{BS18} it is 
known that
\[
-0.2232823526\ldots\leq C_{\log}\leq 2\log 2 
+\frac12\log\frac23+3\log\frac{\sqrt\pi}{\Gamma(1/3)}=-0.0556053\ldots,
\]
and indeed the upper bound for $C_{\log}$ has been conjectured to be an 
equality using two different approaches \cite{BHS2012b,BS18}.
\subsection{Main result}
Smale's 7th problem seems to be more difficult than the main problem in 
\cite{BezIII}: the 
main result in this paper is a complete solution to the latter. 
More exactly, we have the following result.

\begin{theorem}\label{th:intro}
Given $C_1,C_2>0$ there exists a constant $C>0$ with the following property. Let 
$N\geq1$ and let $M,r_1,\ldots,r_M$ be positive integer numbers such that $M\geq2$ and
\begin{itemize}
\item $N=r_M+2(r_1+\cdots+r_{M-1})$, and
\item $C_1 j\leq r_j\leq C_2 j$ for $1\leq j\leq M$.
\end{itemize}
For $1\leq j\leq M-1$, let $h_j,H_j\in[0,1)$ be defined by
\[
h_j=1-\frac{2}{N}\sum_{k=1}^{j-1}r_k -\frac{r_j}{N},\quad H_j=h_j-\frac{r_j}{N},
\]
and write $r_j=6s_j+rem_j$ where $rem_j\in\{0,\ldots,5\}$ for $2\leq j\leq M$.
Consider the degree $N$ polynomial 
$P_N(z)=P_N^{(1)}(z)P_N^{(2)}(z)P_N^{(3)}(z)P_N^{(4)}(z)$ where
\begin{align*}
P_N^{(1)}(z)=&(z^{4s_M+rem_M}-1)\left(z^{r_1}-\rho(h_1)^{r_1}
\right)\left(z^{r_1}-1/\rho(h_1)^{r_1}\right),\\
P_N^{(2)}(z)=&\left(z^{s_2}-\rho(H_1)^{s_2}
\right)\left(z^{s_2}-1/\rho(H_1)^{s_2}\right),\\
P_N^{(3)}(z)=& 
\prod_{j=2}^{M-1}\left(z^{4s_j+rem_j}-\rho(h_j)^{
4s_j+rem_j}
\right)\left(z^{4s_j+rem_j}-1/\rho(h_j)^{4s_j+rem_j}
\right),\\
P_N^{(4)}(z)=& 
\prod_{j=2}^{M-1}\left(z^{s_j+s_{j+1}}-\rho(H_j)^{
s_j+s_{j+1}}
\right)\left(z^{s_j+s_{j+1}}-1/\rho(H_j)^{s_j+s_{j+1
}}\right),
\end{align*}
where if $s_2=0$ or if $s_{j}+s_{j+1}=0$ the corresponding term is removed from the product and $\rho(x)=\sqrt{(1-x)/(1+x)}$. 
Then, $\mu_{\rm norm}(P_N)\leq C\sqrt{N}$.
\end{theorem}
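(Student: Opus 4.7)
The strategy is to work entirely on the sphere and exploit formula \eqref{def:workingmu2}, which reduces the estimate $\mu_{\rm norm}(P_N)\le C\sqrt{N}$ to establishing
\[
\max_{1\le i\le N}\frac{\|\hat P_N\|_{L^2(d\sigma)}}{\prod_{j\neq i}|\hat z_i-\hat z_j|}\le \frac{C'}{\sqrt{N+1}}.
\]
Thus I need matching upper and lower bounds: an upper bound on the spherical $L^2$ norm of $\hat P_N$ and, for each zero $\hat z_i$, a lower bound on the product of chordal distances from $\hat z_i$ to the remaining zeros.

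I would first unpack the geometry. The circle $|z|=\rho(h)=\sqrt{(1-h)/(1+h)}$ is the inverse stereographic image of the parallel $\{c=-h\}\subset\S$, while $|z|=\rho(h)^{-1}$ maps to $\{c=h\}$; thus each binomial factor $z^n-\rho(h)^n$ places $n$ equispaced points on the parallel at height $-h$, and similarly for its companion. Consequently, the zeros of $P_N$ lie on the equator together with the $M-1$ pairs of parallels at heights $\pm h_j,\pm H_j$, and the definitions of $h_j,H_j$ together with the populations $r_j$, $4s_j+rem_j$, $s_j+s_{j+1}$ are engineered so that each spherical strip between consecutive parallels carries a number of points proportional to its area, making the configuration approximately equi-areal.

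The workhorse for both estimates is a closed form for the chordal potential generated by a single parallel. For $w\in\S$ of spherical angle $\theta$ and azimuth $\phi$, and for $n$ equispaced points at spherical angle $\theta'$ with azimuth offset $\phi_0$, the chordal-distance formula combined with $\prod_{k=0}^{n-1}(a-be^{2\pi ik/n})=a^n-b^n$ gives
\[
\prod_{k=0}^{n-1}|w-\hat\zeta_k|^2 = u^{2n}-2(uv)^n\cos\bigl(n(\phi-\phi_0)\bigr)+v^{2n},\quad u=2\sin\tfrac{\theta}{2}\cos\tfrac{\theta'}{2},\ v=2\cos\tfrac{\theta}{2}\sin\tfrac{\theta'}{2}.
\]
Hence $\hat P_N(w)^2$ factors as the product across all parallels of such elementary expressions; evaluating the same formula at $w=\hat z_i$ yields every factor of $\prod_{j\neq i}|\hat z_i-\hat z_j|$ except the contribution from $\hat z_i$'s own parallel, which equals $n(\sin\theta')^{n-1}$ by the classical identity $\prod_{k=1}^{n-1}\sin(\pi k/n)=n/2^{n-1}$.

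For the $L^2$ upper bound I would majorize each parallel factor by its latitude-only envelope $(u^n+v^n)^2$, reducing to a one-dimensional integral in the height $c$; a Riemann-sum comparison that exploits $h_j-h_{j+1}=(r_j+r_{j+1})/N$ then bounds this integral by $(4/e)^N$ times at most a polynomial in $N$, matching $\exp(2N\int_{\S\times\S}\log|x-y|\,d\sigma(x)d\sigma(y))$ with the continuous energy constant $\kappa=\tfrac12-\log 2$. The main obstacle, and the technical core of the theorem, is the matching lower bound at each zero: the factor $u^{2n}-2(uv)^n\cos(n(\phi-\phi_0))+v^{2n}$ collapses to the small value $(u^n-v^n)^2$ precisely when the azimuth of $\hat z_i$ resonates with the cosine oscillation on another parallel, and a single bad resonance would suffice to spoil the estimate. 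The interlaced auxiliary parallels at heights $\pm H_j$ carrying $s_j+s_{j+1}$ points, and the decomposition $r_j=6s_j+rem_j$ that splits each primary population into pieces of incompatible sizes, are introduced precisely to rule out such resonances, while the growth condition $C_1j\le r_j\le C_2j$ keeps neighbouring populations comparable so that no single azimuth can simultaneously resonate with more than a bounded amount of the product. Once resonance is avoided, multiplying the parallelwise lower bounds yields $\prod_{j\neq i}|\hat z_i-\hat z_j|\ge c\sqrt{N}\,(2/\sqrt{e})^N$, and combining this with the $L^2$ upper bound in \eqref{def:workingmu2} delivers $\mu_{\rm norm}(P_N)\le C\sqrt{N}$.
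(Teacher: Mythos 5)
Your opening moves are sound: using \eqref{def:workingmu2}, unpacking the stereographic geometry of $z^n-\rho^n$, and observing that the configuration lies on symmetric parallels with populations proportional to band areas. That is exactly the paper's starting point, which then proceeds by \emph{potential-theoretic comparison}: it shows that the discrete potential $\sum_i\log|p-p_i|$ differs from the continuous potential $N\int_{\S}\log|p-q|\,d\sigma(q)$ by $\log(\sqrt N\operatorname{dist}(p,\mathcal P_N))+O(1)$, using quadrature error bounds (midpoint and Simpson rules) at two levels: band-to-parallel and parallel-to-points. Your proposal instead tries to work \emph{algebraically} with the closed-form product $|z^n-\rho^n e^{in\phi_0}|^2$ for each parallel. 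That is a genuinely different framing, but your sketch has two substantial problems.

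First, you misidentify the role of the interlaced parallels $Q_{H_j}$ and the decomposition $r_j=6s_j+rem_j$. They are not there to break azimuthal resonance: they encode a Simpson quadrature rule. In Proposition~\ref{bandparallel} the band integral over $B_j$ is approximated by $\tfrac{\tilde r_j}{6}\bigl(f_p(H_{j-1})+4f_p(h_j)+f_p(H_j)\bigr)$ with error $O\bigl(\epsilon^4/(1-h_j^2)^4\bigr)$; a pure midpoint rule would give an error of order $1/j$ per band, and the sum over bands would grow like $\log N$ rather than staying $O(1)$. The weights $s_j+s_{j+1}$ on $Q_{H_j}$ arise simply because that boundary parallel is shared between bands $B_j$ and $B_{j+1}$, each contributing its Simpson endpoint weight $\tilde r_j/6$ and $\tilde r_{j+1}/6$.

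Second, the ``resonance avoidance'' claim is factually false for this construction and is also not what the argument needs. Every factor $z^n-\rho^n$ in $P_N$ has a root on the positive real axis, so \emph{every} parallel of $\mathcal P_N$ carries a point at azimuth $0$; for a zero $\hat z_i$ at azimuth $0$, the cross term $\cos(n'\phi)$ equals $1$ on every other parallel simultaneously, i.e.\ total resonance. Since the theorem is nonetheless true, your claim that ``a single bad resonance would suffice to spoil the estimate'' cannot be right; the real point (which the paper's quadrature bounds make precise) is that even when the nearest point on an adjacent parallel is only $O(1/\sqrt N)$ away, removing a bounded number of the worst factors and controlling the rest by $\sum_j 1/j^2$ yields a uniform $O(1)$ error. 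Your sketch leaves exactly this, which you yourself call ``the technical core,'' undone, and the mechanism you propose to handle it does not exist in the construction. The upper bound you outline for $\|\hat P_N\|_{L^2}$ is more plausible, but as stated it too is a program, not a proof.
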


The reader may note that there is a lot of symmetry in the description of the 
polynomial. Indeed, a very intuitive geometrical description of its zeros will 
be given in Section \ref{sec:construction}.

For a given $N$, there exist in general many choices of $M$ and $r_1,\ldots,r_M$  satisfying the hypotheses of Theorem~\ref{th:intro}. 
For all these choices, the corresponding polynomial satisfies $\mu_{\rm norm}(P_N)\leq C\sqrt{N}$. It is easy to write down different choices with desired properties. For example, one can choose to produce polynomials with rational coefficients or search for the choice that gives, for fixed $N$, the smallest value of $\mu_{\rm norm}$. We now describe a very simple choice that shows 
that 
$M,\ r_1,\ldots,r_M$ can be easily constructed for any $N$. For $t\in(0,\infty)$, by $\lfloor
t\rfloor$ we denote the largest integer that is less than or equal to $t$.
\begin{lemma}\label{lem:erres}
Let $N\geq16$. Then, the following choice of $M,r_1,\ldots,r_M$ satisfies the 
hypotheses of Theorem~\ref{th:intro}.
\begin{itemize}
\item $M=\lfloor\sqrt{N/4}\rfloor\geq2$.
\item $r_j=4j-1$ for $1\leq j\leq M-1$.
\item $r_M=N-2(r_1+\cdots+r_{M-1})=N-4M^2+6M-2$.
\end{itemize}
\end{lemma}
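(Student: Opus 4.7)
The plan is a direct arithmetic verification of the three conditions in Theorem~\ref{th:intro}: the identity $N=r_M+2(r_1+\cdots+r_{M-1})$, the positivity and integrality of the $r_j$, and the two-sided linear bound $C_1 j\le r_j\le C_2 j$.

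First I would handle the algebraic identity. Using the formula for the sum of an arithmetic progression,
\[
r_1+\cdots+r_{M-1}=\sum_{j=1}^{M-1}(4j-1)=4\cdot\frac{(M-1)M}{2}-(M-1)=2M^2-3M+1,
\]
so by the definition of $r_M$ we obtain $r_M+2(r_1+\cdots+r_{M-1})=N$ automatically. Integrality of every $r_j$ is immediate, and $M\ge 2$ follows from $N\ge 16$ since then $\sqrt{N/4}\ge 2$.

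Next I would extract the information hidden in $M=\lfloor\sqrt{N/4}\rfloor$, namely the double inequality $4M^2\le N<4(M+1)^2=4M^2+8M+4$. Substituting into $r_M=N-4M^2+6M-2$ gives
\[
6M-2\le r_M< 14M+2.
\]
This simultaneously shows $r_M>0$ (since $M\ge 2$ yields $r_M\ge 10$), and provides the required linear bounds at $j=M$. For $1\le j\le M-1$ we have $3j\le 4j-1=r_j\le 4j$ trivially, so the pair $C_1=3$, $C_2=15$ (say) works uniformly in $j$: the lower inequality $r_M\ge 6M-2\ge 3M$ holds for all $M\ge 1$, and the upper inequality $r_M\le 14M+1\le 15M$ holds for $M\ge 1$ as well.

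There is no real obstacle here; the only slightly delicate point is making sure the bounds on $r_M$ come out linear in $M$ (and hence in $j$ for $j=M$), which is exactly why the floor function is taken with respect to $\sqrt{N/4}$: any smaller choice of $M$ would make $r_M$ grow quadratically and spoil the upper bound, while any larger choice would force $r_M$ to become negative. Once these bounds are in place, Theorem~\ref{th:intro} applies with the constants $C_1,C_2$ identified above.
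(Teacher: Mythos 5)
Your proof is correct and follows essentially the same approach as the paper: both verify the trivial arithmetic identity and then use $4M^2\le N<4(M+1)^2$ (forced by the floor in $M=\lfloor\sqrt{N/4}\rfloor$) to derive linear two-sided bounds on $r_M$, while the bounds for $j\le M-1$ are immediate from $r_j=4j-1$. The paper states the conclusion as $M\le r_M\le 16M$ (i.e.\ $C_1=1$, $C_2=16$); your $C_1=3$, $C_2=15$ is a harmless variant of the same estimate.
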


\begin{proof}
The only item to be checked is that, for example, $M\leq r_M\leq 16M$. This is trivially 
implied by the choice of $M$ that guarantees $4M^2\leq N\leq 4M^2+8M+4$.
\end{proof}
The normalized condition number of the polynomials compared to $\sqrt{N}$ corresponding to Lemma \ref{lem:erres} is approximated numerically in Figure \ref{fig:2}.

\begin{figure}[htp]%
	\centering
	\includegraphics[width=\linewidth]{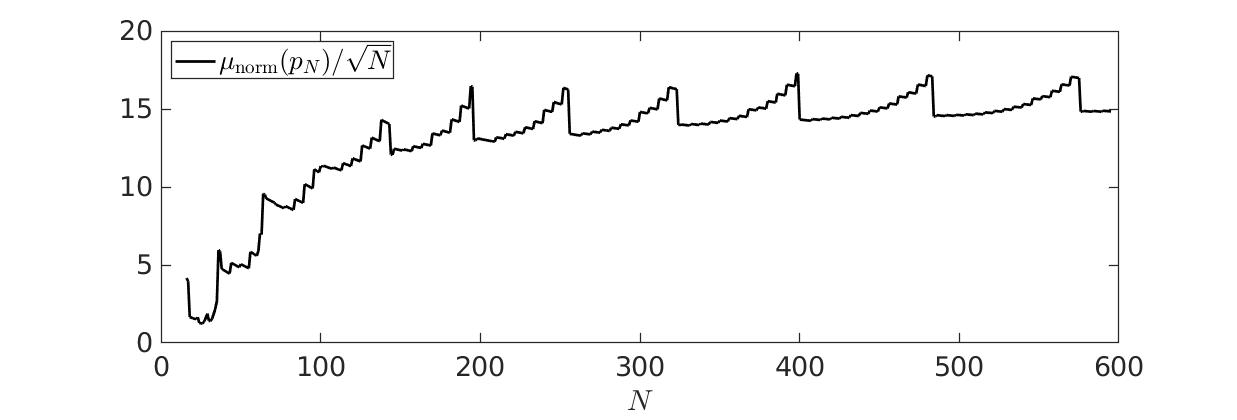}
	\caption{Numerical approximation of $\mu_{\rm norm}(p_N)/\sqrt{N}$ for $p_N$ as in Lemma~\ref{lem:erres} up to degree $595$. The peaks correspond to changes in the value of $M$ as $N$ increases.}\label{fig:2}
\end{figure}

\begin{remark}
Theorem~\ref{th:intro} shows much more than asked in 
Problem~\ref{problem:main} since we get sublinear growth of the condition 
number. The 
presence of the (uncomputed) constant $C$ is not an issue since for all but a 
finite number of values of $N$ we have $C\sqrt{N}\leq N$ and for the first 
values a simple enumeration of the polynomials with rational coefficients will 
produce in finite time a polynomial such that $\mu_{\rm norm}(P)\leq N$. Our 
Theorem~\ref{th:intro} thus fully answers Problem~\ref{problem:main} above.
\end{remark}
\begin{remark}
From Lemma \ref{lem:lb}, the condition number of our sequence of polynomials 
can at most be improved by some constant factor.
\end{remark}

\subsection{Atomization of the logarithmic potential}\label{sec:introgeom}
Theorem~\ref{th:intro} will be proved by atomizing the surface measure in 
$\S$ and approximating the logarithmic potential of the continuous surface 
measure by a potential generated by a measure consisting of equal-weighted 
atoms. This atomization is a well-known technique in
non-harmonic Fourier analysis \cite{LyuSod,LyuSeip}. 

The heuristic argument is 
that if  one places the atoms evenly distributed acording to the surface 
measure, the discrete potential will mimic the  continuous potential which is
constant on the sphere and therefore the numerator and the denominator in 
\eqref{def:workingmu2} will both be very similar. Then, the polynomial whose 
zeros are the inverse stereographic projection of this point set will be well 
conditioned.

Throughout the paper we denote by $C$ a constant that may be different in each 
instance that appears. By $f \lesssim g$ we mean that there 
is a universal constant  $C > 0$ (i.e. independent of $N$) such that $f\le C g$ and we 
write $f \eqsim g $ if there is a universal constant $C>0$ such that 
$C^{-1}f  \le g  \le C f $. 

In Section \ref{sec:construction} of this paper we describe a construction that satisfies the following result.
%
%


\begin{theorem}\label{th:multiplier}
There exists a set $\mathcal P_N$ of $N$ points in $\S$ such that
if $\dist(p,\mathcal P_N)$ denotes the distance from $p\in \S$ to 
$\mathcal P_N$ and 
$\kappa=1/2-\log2.$
Then, for all $p\in\S$ we have $\sqrt{N} \dist(p,\mathcal P_N)\lesssim 1$ and moreover
\begin{equation} 			\label{main_eq}
\sum_{i=1}^N \log  |p-p_i | +\kappa N -\log \left(\sqrt{N} \dist(p,\mathcal  P_N)\right)=O(1).
\end{equation}
Equivalently, 
\begin{equation} 			\label{main_eq2}
\frac{\prod_{i = 1}^N  |p-p_i |^2}{ e^{-2\kappa N} N \dist^2(p, \mathcal P_N)}\eqsim 1, \qquad 
\forall 
p\in\S, \quad \forall N.  
\end{equation}
\end{theorem}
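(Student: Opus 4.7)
The plan is to take $\mathcal P_N$ to be the inverse stereographic projections of the zeros of the polynomial $P_N$ constructed in Theorem~\ref{th:intro}. Each binomial factor $z^r-\beta^r$ contributes zeros uniformly spaced on the circle $|z|=|\beta|$ of $\mathbb{C}$, which is mapped by stereographic projection to a parallel of $\mathbb{S}^2$. With $|\beta|=\rho(h)=\sqrt{(1-h)/(1+h)}$ or $|\beta|=1/\rho(h)$, the parallels sit symmetrically at heights $\pm h$. The heights $\pm h_j,\pm H_j$ together with the multiplicities prescribed by $r_j$ and $s_j+s_{j+1}$ are calibrated so that each parallel carries roughly the number of points prescribed by the uniform surface measure on the spherical slab it represents.

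Under the hypothesis $C_1 j\le r_j\le C_2 j$ one has $M\eqsim\sqrt N$, consecutive heights are spaced by $\sim 1/\sqrt N$, and the chord spacing of the points along each parallel is also $\sim 1/\sqrt N$. The covering bound $\sqrt N\dist(p,\mathcal P_N)\lesssim 1$ then follows from an elementary geometric inspection: any $p\in\mathbb{S}^2$ lies within distance $\lesssim 1/\sqrt N$ of some parallel and, projected onto it, sits at distance $\lesssim 1/\sqrt N$ from one of its evenly spaced points.

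For the main identity I would transfer the computation to the plane via the chord formula $|\hat z-\hat w|^2=4|z-w|^2/((1+|z|^2)(1+|w|^2))$, which rewrites $\sum_i\log|p-p_i|$ as $\log|P_N(z)|$ plus explicit corrections $N\log 2-\tfrac N2\log(1+|z|^2)-\tfrac12\sum_i\log(1+|z_i|^2)$, where the last sum is evaluated exactly parallel by parallel (each parallel at height $h$ contributes $r\log(2/(1+h))$). The factorization of $P_N$ reduces $\log|P_N(z)|$ to a sum of terms $\log|z^r-\beta^r|$, and the elementary bound $\log|z^r-\beta^r|=r\log\max(|z|,|\beta|)+O(1)$, valid whenever $||z|-|\beta||\gtrsim |\beta|/r$, permits this sum to be replaced by a midpoint-type Riemann sum approximating an integral of the shape $\int \log\max(|z|,\rho(h))\,dh$. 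Transporting this integral back to $\mathbb{S}^2$ and invoking the value of $\kappa$ from \eqref{conjetura}, the leading $-\kappa N$ term emerges while the elementary corrections cancel, leaving an $O(1)$ aggregate error.

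The main obstacle, and the source of the $\log(\sqrt N\dist(p,\mathcal P_N))$ correction, comes from the at most two factors whose parallel is closest to $|z|$, for which the uniform max-bound breaks down. Writing $z=\beta\xi$ for such a factor and isolating the nearest $r$-th root of unity $\omega^{k_*}$, the split
\[
\log|\xi^r-1|=\log|\xi-\omega^{k_*}|+\log\Bigl|\frac{\xi^r-1}{\xi-\omega^{k_*}}\Bigr|
\]
separates a singular term, equal after undoing the stereographic projection to $\log\dist(p,\mathcal P_N)+O(1)$, from a regular piece that is $\log r+O(1)=\tfrac12\log N+O(1)$ by a standard finite-geometric-sum estimate away from the singularity. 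These two contributions combine to the advertised $\log(\sqrt N\dist(p,\mathcal P_N))$ term, and careful bookkeeping of the $O(1)$ contributions from the remaining parallels—where the gap $||z|-|\beta||$ is large enough that the exponential decay in $\log|z^r-\beta^r|-r\log\max(|z|,|\beta|)$ makes their collective error summable to $O(1)$—closes the estimate.
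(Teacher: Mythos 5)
Your overall strategy runs parallel to the paper's: take $\mathcal P_N$ to be the stereographic lifts of the zeros of $P_N$, bound the covering radius by elementary separation of parallels and of points within each parallel, and then split the potential sum into a ``closest parallel'' contribution (which produces $\log\bigl(\sqrt N\dist(p,\mathcal P_N)\bigr)$) plus a quadrature comparison of the remaining parallels against the equilibrium potential $-\kappa N$. The genuine difference is that you work downstairs in $\C$ with $\log|z^r-\beta^r|$ and the chord formula, whereas the paper stays on $\S$ and uses the exact evaluation $f_p(h)=\tfrac12\log(1-hc+|h-c|)$ together with error bounds on derivatives of $\theta\mapsto\log|p-\gamma_h(\theta)|$. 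These are algebraically equivalent routes, and the error (i) you do discuss, namely $\log|z^r-\beta^r|-r\log\max(|z|,|\beta|)$ for far parallels, really is exponentially small and summable to $O(1)$.

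The gap is the quadrature error, which you describe only as ``a midpoint-type Riemann sum'' and never actually estimate. Denoting by $t_j$ the heights of the parallels and by $m_j$ the number of points on each, you need $\sum_j m_j f_p(t_j)=\tfrac N2\int_{-1}^1 f_p(t)\,dt+O(1)$. Using the midpoint rule on the $j$-th band (width $\eqsim r_j/N$, second derivative $f_p''\eqsim (1-|h_j|)^{-2}$) the per-band error is $\eqsim r_j\cdot(r_j/N)^2/(1-|h_j|)^2\eqsim 1/j$, and since $f_p''$ has one sign there is no cancellation: the total is $\eqsim\log M\eqsim\log N$, not $O(1)$. This is exactly why the construction distributes $\tilde r_j/6$ of the band-$j$ points on each bounding parallel $Q_{H_{j-1}},Q_{H_j}$ and $4\tilde r_j/6+\mathrm{rem}_j$ on the midline $Q_{h_j}$: one then gets a composite Simpson rule, with per-band error $\eqsim r_j\cdot(r_j/N)^4/(1-|h_j|)^4\eqsim 1/j^3$ for the $\tilde r_j$ part, plus a midpoint error $\eqsim 1/j^2$ for the bounded remainder $\mathrm{rem}_j<6$, both summable (this is (\ref{x1})--(\ref{x4}) in Proposition~\ref{bandparallel}, relying on Lemmas~\ref{lem:comparsionparalelband} and~\ref{lem:comparsionparalelband2}). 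Without identifying and exploiting this Simpson structure your argument proves at best $O(\log N)$ in \eqref{main_eq}, which would still solve Problem~\ref{problem:main} but not give the optimal $\mu_{\rm norm}\lesssim\sqrt N$.

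A smaller imprecision: for the closest factor you write that the regular piece equals $\log r+O(1)=\tfrac12\log N+O(1)$, but $r\eqsim j$ where $j$ is the index of the nearest parallel, which can be $O(1)$ near the poles. The identity $\prod_{k\neq 0}|1-\omega^k|=r$ only gives $\log r$; the missing $-\tfrac12\log(1-h_j^2)$ comes from the height-dependent chord corrections $\tfrac12\log(1+|z_i|^2)$, which you treat globally but do not re-attribute to this term. Only the combination $\log r-\tfrac12\log(1-h_j^2)$ equals $\tfrac12\log N+O(1)$, as the paper obtains via Lemma~\ref{lem:facil2} inside Lemma~\ref{lem:veryclose}. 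This is fixable bookkeeping, but as stated the equality is false.
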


\begin{remark}
	In the case that $p=p_i$ for some $i\in\{1,\ldots,N\}$, \eqref{main_eq2} reads 
\begin{equation*} 		
\frac{\prod_{j\neq i}^N  |p_i-p_j |^2}{e^{-2\kappa N} N}\eqsim 1.
\end{equation*}
\end{remark}

\subsection{Proof of Theorem \ref{th:intro}}
	
	Our main theorem follows immediately from Theorem \ref{th:multiplier} and \eqref{def:workingmu2}. Indeed, we take the polynomial $P_N$ in 
Theorem \ref{th:multiplier} to be  the one whose zeros correspond, under the stereographic projection, to the spherical points $\mathcal P_N$ in 
Theorem \ref{th:multiplier} when the points distributed in each parallel of latitude $t$ are 
rotated to contain the point $(\sqrt{1-t^2},0,t)$. As a result, from \eqref{def:workingmu2}
	\[
	\mu_{\rm norm}(P_N) \lesssim \sqrt{N(N+1)} \frac{\sqrt{N}e^{-\kappa N}\left(\int_{ \S}\dist^2(p,\mathcal P_N)d\sigma(p) \right)^{1/2}}{\sqrt{N}e^{-\kappa N}}\lesssim \sqrt{N}.
	\]

\section{Organization of the paper}

In Section \ref{sec:lowerbound} we prove a sharp lower bound
for the condition number of any polynomial, Lemma \ref{lem:lb}.
In Section \ref{sec:construction} we construct the set of points $\mathcal P_N$ in $\S$ used in Theorem \ref{th:multiplier}
and which give the zeros of the polynomials $P_N$ in Theorem \ref{th:intro}. We study also the separation properties of 
$\mathcal P_N.$  In Section \ref{sec:preliminary} we prove some 
preliminary results comparing the discrete and the continuous potential in a parallel and
the potential in three parallels with the potential in a band. Finally we prove Theorem \ref{th:multiplier} at the end of 
Section \ref{sec:final} as a consequence of the comparison between the discrete potential, the potential in parallels and 
the continuous potential.

%
%


\section{Lower bound for the condition number}\label{sec:lowerbound}

In this section we prove Lemma \ref{lem:lb} 

\begin{proof}

Recall that from (\ref{def:workingmu2})
\[
\mu_{\rm norm}(P)=\frac12\sqrt{N(N+1)}\frac{R}{S},
\]
with 
\[
R=\left(\int_{ \S}\prod_{i=1}^N|p-\hat z_i|\,d\sigma(p)\right)^{1/2},\quad S=\min_{i=1\ldots N}\prod_{j\neq i}|\hat z_i-\hat z_j|.
\]
Here, $P(z)=\prod_{i=1}^N(z-z_i)$ and $\hat z_i$ are the associated points in the unit sphere.
	We bound separately $R$ and $S$. Using Jensen's inequality we have
	\begin{multline*}
	\log R=\frac{1}{2}\log \int_{ \S}\prod_{i=1}^N|p-\hat z_i|^2\,d\sigma(p)\geq 
	\frac{1}{2} \int_{  \S}\log\prod_{i=1}^N|p-\hat z_i|^2\,d\sigma(p)=\\
		\sum_{i=1}^N \int_{  \S}\log|p-\hat z_i|\,d\sigma(p)=-\kappa N,
	\end{multline*}
	and hence $R\geq e^{-\kappa N}$. For bounding $S$, note that from \eqref{eq:as}
	\[
	-\sum_{\stackrel{i,j=1}{i\neq j}}^N\log|\hat z_i-\hat z_j|\geq \kappa N^2-\frac{N}{2}\log N-CN,
	\]
	for some $C>0$. On the other hand,
	\begin{multline*}
	-\sum_{\stackrel{i,j=1}{i\neq j}}^N\log|\hat z_i-\hat z_j|=-\log\left(\prod_{i=1}^N\prod_{j\neq i}|\hat z_i-\hat z_j|\right)\leq -\log(S^N)=-N\log S.
	\end{multline*}
	From
	\[
	-\log(S^N)\geq \kappa N^2-\frac{N}{2}\log N-CN,
	\]
	we get
	\[
	S\lesssim e^{-\kappa N}\sqrt{N},
	\]
proving $R/S\gtrsim 1/\sqrt{N}.$ The lemma follows.
\end{proof}


\section{Construction of the point set $\mathcal P_N$} \label{sec:construction}

In this section, we define the set of points 
$\mathcal P_N=\{p_1,\ldots,p_N\}\subset \S$ appearing in Theorem \ref{th:multiplier}. 
The images of these points through the stereographic projection are 
the zeros of the polynomials in Theorem \ref{th:intro}. The set $\mathcal P_N$ will be a union of equidistributed 
points in symmetric parallels with respect to the $xy$ plane and the construction is similar to the one in \cite{Diamond}.

We denote the parallels in $\S$ by 
$$Q_{h}=\{(x,y,z)\in \S : z=h \},\;\;\;-1\leq h \leq 1.$$

Given $C_1,C_2>0$ and $N\ge 1,$ let $M\geq1$ and 
$r_1,\ldots,r_M\in\Z$ be positive integers such that
\[
N=2(r_1+\ldots+r_{M-1})+r_M,
\]
with 
$$C_1j \le r_j \le C_2 j,$$ for all $1\le j\le M.$

Let
\[
r_{M+1}=r_{M-1},\ldots, r_{2M-1}=r_1. 
\] 
We choose 
parallel heights
$1=H_0>H_1>\cdots>H_{M-1}>0$ and
symmetrically $H_{M+j}=-H_{M-(j+1)}$ for $j=0,\dots ,M-1.$
For $1\le j\le 2M-1$ we define the bands
$$B_j=\{ (x,y,z)\in \S \;\:\; H_j\le z\le H_{j-1} \},$$
where $B_1,B_{2M-1}$ are spherical caps. Then $\S=\bigcup_{j=1}^{2M-1} B_j$ and if we define
\[
H_j=1-\frac{2}{N}\sum_{k=1}^j r_k\quad 0\leq j\leq 2M-1,
\]
we have that
\[
\sigma(B_j)=\frac{H_{j-1}-H_j}{2}=\frac{r_j}{N},\quad 1\leq j\leq 2M-1.
\] 
We consider also parallels with heights
\[
h_j=\frac{H_{j-1}+H_j}{2}=H_{j-1}-\frac{r_j}{N}=H_{j}+\frac{r_j}{N}=1-\frac{2}{N}\sum_{k=1}^{j-1}
r_k 
-\frac{r_j}{N},
\]
for $1\leq j\leq 2M-1,$ and observe that $h_M=0$ and $h_{M+j}=-h_{M-j}$ for $j=1,\dots, M-1.$

Observe that for $1\leq j\leq M$
\begin{equation}\label{eq:zjbound1}
1-\frac{C_2j^2}{N}\leq h_j\leq 1-\frac{C_1j^2}{N},\quad 
1-\frac{C_2j(j+1)}{N}\leq H_j\leq1-\frac{C_1j(j+1)}{N}
\end{equation}
and
\begin{equation}\label{eq:zjbound2}
-1+\frac{C_1j^2}{N}\leq h_{2M-j}\leq -1+\frac{C_2j^2}{N},\quad 
-1+\frac{C_1j(j-1)}{N}\leq H_{2M-j}\leq -1+\frac{C_2j(j-1)}{N}.
\end{equation}
Note that we have
\[
C_1M^2= C_1M+2\sum_{j=1}^{M-1}C_1j\leq  N\leq C_2M+2\sum_{j=1}^{M-1}C_2j\leq 
C_2M^2.
\]

 We say that a set of points are equidistributed in a parallel if they are, up to 
homotety, rotation and traslation, a set of roots of unity in the circle defined by the parallel. 
Given the points $r_j$ above we define $\tilde r_1=\tilde r_{2M-1}=0$
and $r_j=\tilde r_j+rem_j$ for $2\leq j\leq 2M-2$ where $\tilde r_j$ is a multiple of $6$ and $0\leq rem_j\leq 5.$
Note that in Theorem \ref{th:multiplier} we denote $\tilde r_j=6s_j$.
Then to define the set $\mathcal P_N$ 
\begin{itemize}
\item we take $r_1$ points equidistributed in $Q_{h_1}$ and similarly $r_{2M-1}=r_1$ points equidistributed in $Q_{h_{2M-1}}=Q_{-h_1}$.
\item For $2\leq j\leq 2M-1$, we take $\frac{4\tilde{r}_j}{6}+rem_j$ points equidistributed at $Q_{h_j},$ 
$\frac{\tilde{r}_{j-1}+\tilde{r}_{j}}{6}$ points equidistributed in the upper boundary parallel $Q_{H_{j-1}}$ and for $1\leq j\leq 2M-2$ 
we take $\frac{\tilde{r}_j+\tilde{r}_{j+1}}{6}$ 
points equidistributed in the lower boundary parallel $Q_{H_{j}}.$
\end{itemize}
Observe that in this way there are $r_j$ points of $\mathcal P_N$ in
the band $B_j$ for $j=1,\dots, 2M-1.$



\subsection{Geometric properties of the set $\mathcal P_N$}

From the results in this section it follows that the points in $\mathcal P_N$ are uniformly separated i.e. for each 
$p,q\in \mathcal P_N$ distinct
$$\dist (p,q)\gtrsim 1/\sqrt{N},$$
and they are relatively dense i.e.
for all $p\in \S$ we have that
$$\dist (p,\mathcal P_N)\lesssim 1/\sqrt{N}.$$

%

\begin{lemma}\label{lem:dist}
	For $h,c\in(-1,1)$ with $|h|\leq|c|$ and $|h-c|\leq1/4$ we have
	\[
\dist(Q_c,Q_h)\eqsim
	\frac{|h-c|}{\sqrt{1-h^2}}\lesssim\frac{|h-c|}{\sqrt{1-c^2}}.
	\]
\end{lemma}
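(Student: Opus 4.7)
The plan is to reduce to a one-dimensional computation on the closest pair of points, then perform a case analysis based on how close $|h|$ is to $1$.

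First I would exploit the rotational symmetry of the sphere around the $z$-axis. Writing a generic point of $Q_h$ as $p=(\sqrt{1-h^2}\cos\alpha,\sqrt{1-h^2}\sin\alpha,h)$ and similarly for a point $q\in Q_c$ with angle $\beta$, the squared distance is
\[
|p-q|^2 = (1-h^2) + (1-c^2) - 2\sqrt{(1-h^2)(1-c^2)}\cos(\alpha-\beta) + (h-c)^2,
\]
which is minimized at $\alpha=\beta$. Hence $\dist(Q_c,Q_h)^2 = (\sqrt{1-c^2}-\sqrt{1-h^2})^2 + (h-c)^2$.

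Next I would rationalize the first term to extract a factor of $(h-c)^2$:
\[
(\sqrt{1-c^2}-\sqrt{1-h^2})^2 = \frac{(h-c)^2(h+c)^2}{(\sqrt{1-c^2}+\sqrt{1-h^2})^2},
\]
so that
\[
\dist(Q_c,Q_h)^2 = (h-c)^2 \left[\,1 + \frac{(h+c)^2}{(\sqrt{1-c^2}+\sqrt{1-h^2})^2}\,\right].
\]
The task then reduces to showing the bracketed factor is $\eqsim 1/(1-h^2)$. The hypothesis $|h|\leq|c|$ gives $\sqrt{1-h^2}\leq\sqrt{1-c^2}+\sqrt{1-h^2}\leq 2\sqrt{1-h^2}$, so multiplying through by $(1-h^2)$ it suffices to show that
\[
(1-h^2) + (h+c)^2 \cdot \theta \eqsim 1
\]
for some $\theta \in [1/4,1]$ depending on $h,c$. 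The upper bound is immediate from $(1-h^2)\leq 1$ and $(h+c)^2\leq 4$.

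The main obstacle is the lower bound, which requires a case split. If $|h|\leq 1/2$, then $1-h^2\geq 3/4$ already gives the bound. The delicate regime is $|h|>1/2$, where $1-h^2$ is small and one must produce the positive contribution from $(h+c)^2$. Here I would use the hypothesis $|h-c|\leq 1/4$: combined with $|c|\geq|h|>1/2$, this forces $h$ and $c$ to have the same sign (since opposite signs would give $|h-c|\geq 2|h|>1$), and hence $|h+c|=|h|+|c|\geq 2|h|>1$, so $(h+c)^2>1$ and the term dominates. This yields $\dist(Q_c,Q_h) \eqsim |h-c|/\sqrt{1-h^2}$. Finally, the inequality $|h-c|/\sqrt{1-h^2} \lesssim |h-c|/\sqrt{1-c^2}$ holds with constant $1$, because $|h|\leq|c|$ means $\sqrt{1-h^2}\geq\sqrt{1-c^2}$.
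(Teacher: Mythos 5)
Your proof is correct, and it takes a genuinely different route from the paper. The paper works with the angular distance $\varphi$ between the parallels, writing $\dist(Q_c,Q_h)=2\sin(\varphi/2)$, invoking the mean value theorem for $\arcsin$ to get the lower bound, and a separate trigonometric identity $\sin(\arcsin h-\arcsin c)=h\sqrt{1-c^2}-c\sqrt{1-h^2}$ (rationalized) for the upper bound; each step needs a small comparability argument ($2\sin(\varphi/2)\eqsim\varphi$ on the relevant range, placement of the intermediate point $\zeta$, etc.). You instead compute the distance between the two circles exactly, $\dist(Q_c,Q_h)^2=(\sqrt{1-c^2}-\sqrt{1-h^2})^2+(h-c)^2$, rationalize once to factor out $(h-c)^2$, and reduce everything to showing $(1-h^2)+(h+c)^2\theta\eqsim1$ with $\theta\in[1/4,1]$, which you settle by the same kind of case split on $|h|\le 1/2$ versus $|h|>1/2$ that the paper uses (your sign argument correctly uses both hypotheses $|h|\le|c|$ and $|h-c|\le 1/4$). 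Your version buys an exact closed form with fully explicit constants and avoids any calculus, handling upper and lower bounds simultaneously; the paper's version is shorter on the page but leans on several asymptotic comparisons. Both are valid.
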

\begin{proof}
	Note that $\dist(Q_c,Q_h)\leq 1$ and we can write also
	\[
	\dist(Q_c,Q_h)=2\sin\frac{\varphi}{2},
	\]
	where $\varphi$ is the angular distance from $Q_c$ to 
	$Q_h$. Moreover,
	\[
	\varphi=2\arcsin\frac{\dist(Q_c,Q_h)}{2}\leq 2\arcsin\frac12=\frac{\pi}{3}.
	\]
	We first prove the lower bound. Note that for $\varphi\in[0,\pi/3]$
	\[
	\dist(Q_c,Q_h)=2\sin\frac{\varphi}{2}\geq\frac{\varphi}{2}\gtrsim|\arcsin(h)-\arcsin(c)|=\frac{|h-c|}{\sqrt{1-\zeta^2}}
	\]
	for some $\zeta$ in the interval containing $c$ and $h$. Now, if $c$ and $h$ have both the same sign then $\sqrt{1-\zeta^2}\leq \sqrt{1-h^2}$ and we are done. Moreover, if $|h|\leq1/2$ then $|c|\leq3/4$ and $\sqrt{1-\zeta^2}\eqsim1\eqsim\sqrt{1-h^2}$. These are all the cases to cover since $|h-c|\leq1/4$ excludes other situations. We have proved that $\dist(Q_c,Q_h)\gtrsim|h-c|/\sqrt{1-h^2}$. 
	
	For the upper bound, again using the same argument we can assume that $1/2\leq h\leq c\leq 1$. Then,
	\begin{multline*}
	2\sin\frac{\varphi}{2}\lesssim\sin\varphi=|\sin(\arcsin(h)-\arcsin(c))|=|h\sqrt{1-c^2}-c\sqrt{1-h^2}|=\\\frac{c^2-h^2}{h\sqrt{1-c^2}+c\sqrt{1-h^2}}\lesssim\frac{c-h}{\sqrt{1-h^2}}.
	\end{multline*}

\end{proof}


\begin{lemma}\label{lem:facil2}
The distance between two points of $\mathcal P_N$ in the same parallel is or order $1/\sqrt{N}$, i.e.
\[
\frac{\sqrt{1-h_j^2}}{r_j}\eqsim \frac{1}{\sqrt{N}},\quad \frac{\sqrt{1-H_j^2}}{r_j}\eqsim \frac{1}{\sqrt{N}}
\]
where the first claim is valid for $1\leq j\leq 2M-1$, and the second one is valid for $1\leq j\leq 2M-2.$ 
In particular, this implies
\[
\frac{1-h_j^2}{1-H_j^2}\eqsim 1,\quad 1\leq j\leq 2M-2
\]
and similarly
\[
\frac{1-h_j^2}{1-H_{j-1}^2}\eqsim 1,\quad 2\leq j\leq 2M-1.
\]
\end{lemma}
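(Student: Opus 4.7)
The plan is to read both claims as $\sqrt{1-h_j^2} \eqsim r_j/\sqrt N$ and $\sqrt{1-H_j^2} \eqsim r_j/\sqrt N$. This is the essential content: $r$ equidistributed points on a parallel of Euclidean radius $\rho=\sqrt{1-h^2}$ have nearest-neighbour distance $2\rho\sin(\pi/r)\eqsim \rho/r$, and by construction the number of points on $Q_{h_j}$ and on $Q_{H_j}$ is $\eqsim r_j$. Given the hypothesis $r_j\eqsim j$, the claim reduces further to showing $1-h_j^2\eqsim j^2/N$ and $1-H_j^2\eqsim j^2/N$, with suitably reflected indices in the southern hemisphere.

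First I would treat $h_j$ in the range $1\le j\le M$. Since $h_j\ge h_M=0$ we have $1+h_j\in[1,2]$, and \eqref{eq:zjbound1} gives $1-h_j\eqsim j^2/N$; multiplying out yields $1-h_j^2\eqsim j^2/N$, and hence $\sqrt{1-h_j^2}/r_j\eqsim (j/\sqrt N)/j=1/\sqrt N$. For $M<j\le 2M-1$, I would invoke the construction's symmetries $h_{2M-k}=-h_k$ and $r_{2M-k}=r_k$ to reduce to the previous case. The argument for $H_j$ is parallel: for $1\le j\le M-1$ one has $H_j\ge H_{M-1}=r_M/N>0$, and \eqref{eq:zjbound1} gives $1-H_j\eqsim j(j+1)/N\eqsim j^2/N$, producing the same scaling. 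The boundary case $j=M$ is handled separately from $H_M=-r_M/N$ and $r_M\eqsim M\eqsim \sqrt N$, which give $1-H_M^2\eqsim 1\eqsim (r_M/\sqrt N)^2$. For $M<j\le 2M-2$, the reflection $H_{M+k}=-H_{M-1-k}$ combined with $r_{M+k}\eqsim r_{M-1-k}$ reduces once again to the northern case.

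The two ratio statements then fall out by dividing the established estimates: $(1-h_j^2)/(1-H_j^2)\eqsim (r_j/\sqrt N)^2/(r_j/\sqrt N)^2=1$, and for the second, $(1-h_j^2)/(1-H_{j-1}^2)\eqsim (r_j/r_{j-1})^2\eqsim 1$, using $r_j,r_{j-1}\eqsim j$ for $j\ge 2$ (so the two-sided bounds $C_1 j\le r_j\le C_2 j$ imply $r_j/r_{j-1}$ is bounded above and below by universal constants).

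I do not anticipate a deep obstacle here; the only point requiring attention will be the index bookkeeping at the equator, where the parallel height changes sign and one must switch from applying \eqref{eq:zjbound1} directly to invoking the built-in reflection symmetries $h_{M+k}=-h_{M-k}$, $H_{M+k}=-H_{M-1-k}$, and $r_{M+k}=r_{M-k}$ of the construction of $\mathcal P_N$.
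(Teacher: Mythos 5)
Your proof is correct and follows essentially the same route as the paper's: reduce to the northern hemisphere by the reflection symmetries of the construction, note that $1+h_j\eqsim 1$ (resp.\ $1+H_j\eqsim 1$) there so that $1-h_j^2\eqsim 1-h_j$, and then apply the bounds \eqref{eq:zjbound1} together with $r_j\eqsim j$. You are somewhat more explicit than the paper about the equatorial edge case $H_M=-r_M/N$ and the index bookkeeping, but the argument is the same.
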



\begin{proof}
By symmetry we can assume that $j\leq M$. Then, $h_j\geq0$ and hence $\sqrt{1-h_j^2}\eqsim\sqrt{1-h_j}$, which from \eqref{eq:zjbound1} yields
\begin{align*}
\frac{\sqrt{1-h_j^2}}{r_j}\eqsim \frac{\sqrt{1-h_j}}{j} \eqsim\frac{1}{\sqrt{N}}.
\end{align*}
The inequality for $H_j$ is proved in a similar way.
\end{proof}


\begin{lemma}\label{lem:facil3}
	The distance between consecutive parallels is of order $1/\sqrt{N}$, i.e.
\[
\dist(Q_{H_{j-1}},Q_{h_j})\eqsim \frac{1}{\sqrt{N}},\quad 
\dist(Q_{H_{j}},Q_{h_j})\eqsim \frac{1}{\sqrt{N}}.
\]
\end{lemma}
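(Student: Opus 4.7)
The plan is to deduce this directly from the two preceding lemmas (\ref{lem:dist} and \ref{lem:facil2}), since the definitions of $h_j$ and $H_j$ immediately give the exact differences $H_{j-1}-h_j = r_j/N$ and $h_j-H_j = r_j/N$. So the work reduces to applying the metric equivalence of Lemma~\ref{lem:dist} and then substituting the scale for $\sqrt{1-h_j^2}$ coming from Lemma~\ref{lem:facil2}.

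First I would appeal to the symmetry $H_{M+i}=-H_{M-(i+1)}$ and $h_{M+i}=-h_{M-i}$ of the construction to reduce to the range $1\leq j\leq M$, where $H_{j-1}\geq h_j\geq H_j$ (with equality sign in the upper hemisphere except possibly at $j=M$, where $h_M=0$ and $H_M=-r_M/N$). In this range $|h_j|\leq|H_{j-1}|$ and $|H_j|\leq|h_j|$, which are exactly the monotonicity hypotheses needed to apply Lemma~\ref{lem:dist}. The smallness assumption $|h-c|\leq 1/4$ in that lemma is satisfied since $r_j/N\leq C_2 j/N\leq C_2 M/N \lesssim 1/M$ by the relation $N\eqsim M^2$; the finitely many exceptional small values of $M$ can be absorbed into the constants in $\eqsim$.

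Then Lemma~\ref{lem:dist} yields
\[
\dist(Q_{H_{j-1}},Q_{h_j})\eqsim \frac{r_j/N}{\sqrt{1-h_j^2}},\qquad
\dist(Q_{h_j},Q_{H_j})\eqsim \frac{r_j/N}{\sqrt{1-H_j^2}}.
\]
By Lemma~\ref{lem:facil2}, $\sqrt{1-h_j^2}\eqsim r_j/\sqrt{N}$ and (for $j\leq 2M-2$) $\sqrt{1-H_j^2}\eqsim r_j/\sqrt{N}$, and both substitutions give exactly $1/\sqrt{N}$. The only case not immediately covered is $j=M$ for the first formula, where $h_M=0$ makes $\sqrt{1-h_M^2}=1$; but then $r_M/N\eqsim M/N\eqsim 1/\sqrt{N}$ gives the same conclusion directly, so it is consistent.

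There is no serious obstacle here; the proof is essentially a bookkeeping exercise that combines the two preceding lemmas. The only point that requires a small amount of care is verifying the applicability of Lemma~\ref{lem:dist} at the endpoints (the boundary caps $j=1$ and $j=2M-1$ and the equatorial band $j=M$), but each of these is resolved either by the reduction to $j\leq M$ via symmetry or by a direct computation using $r_M\eqsim M\eqsim \sqrt{N}$.
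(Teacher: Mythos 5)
Your proof is correct and follows essentially the same route as the paper's: reduce by symmetry to $h_j\geq 0$, apply Lemma~\ref{lem:dist} to get $\dist\eqsim (r_j/N)/\sqrt{1-h_j^2}$ (resp.\ $/\sqrt{1-H_j^2}$), and conclude via Lemma~\ref{lem:facil2}. You are merely more explicit than the paper about checking the hypotheses $|h|\leq|c|$ and $|h-c|\leq 1/4$ and about the equatorial case $j=M$, which is a harmless (indeed welcome) addition.
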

\begin{proof}
	By symmetry, we can assume that $h_j\geq0$, that implies $|h_j|\leq|H_{j-1}|$.
	From Lemmas \ref{lem:dist}  and \ref{lem:facil2} we have
	\[
	\dist(Q_{H_{j-1}},Q_{h_j})\eqsim \frac{r_j/N}{\sqrt{1-h_j^2}}\eqsim\frac1{\sqrt{N}}.
	\]
	The other inequality is proved in a similar way.
\end{proof}



\section{Comparison of discrete potentials, parallels and bands}\label{sec:preliminary}

For $-1\le h\le 1$ and $p\in \S$ we denote 
$$f_p(h)=\int_{0}^{2\pi}\log |p-\gamma_h(\theta) |\,\frac{d\theta}{2\pi},$$
where
$\gamma_h(\theta)=(\sqrt{1-h^2}\cos\theta,\sqrt{1-h^2}\sin\theta,h).$ 
In words, $f_p(h)$ is the mean value of $\log|p-q|$ when $q$ lies in the parallel $Q_h$.
For $-1\le c,z\le 1$ we denote
\[
R(c,z)=6\sqrt{1-c^2}(1-z^2)+8(1-z^2)^{3/2}.
\]


\begin{lemma}\label{lem:deri2}
Let $\gamma_h(\theta)=(\sqrt{1-h^2}\cos\theta,\sqrt{1-h^2}\sin\theta,h)$, 
$\theta\in[0,2\pi]$ be a parame\-tri\-za\-tion of $Q_h$, and let 
$p=(a,b,c)\in\S\setminus Q_h$. Then,
\[
\left|\frac{d^3}{d\theta^3}\log|p-\gamma_h(\theta)|\right|\leq\frac{\sqrt{1-h^2}
}{|p-\gamma_h (\theta)|}+\frac{R(c,h)}{|p-\gamma_h (\theta)|^3}.
\]
\end{lemma}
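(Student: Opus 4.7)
The plan is to set $L(\theta) := \log|p - \gamma_h(\theta)| = \tfrac12 \log\rho(\theta)$ with $\rho(\theta) := |p-\gamma_h(\theta)|^2$, and differentiate three times via the chain rule, reducing the bound to estimates on $\rho',\rho'',\rho'''$. Writing $p=(a,b,c)\in\S$ and expanding gives
\[
\rho(\theta) = 2 - 2ch - 2\sqrt{1-h^2}\,(a\cos\theta + b\sin\theta),
\]
so that $\rho'(\theta) = 2\sqrt{1-h^2}\,(a\sin\theta - b\cos\theta)$, $\rho''(\theta) = 2\sqrt{1-h^2}\,(a\cos\theta + b\sin\theta)$, and crucially
\[
\rho'''(\theta) = -\rho'(\theta),
\]
so $|\rho'''| = |\rho'|$. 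The chain rule applied to $L = \tfrac12\log\rho$ then produces the three-term formula
\[
L''' = \frac{\rho'''}{2\rho} - \frac{3\,\rho'\rho''}{2\rho^2} + \frac{(\rho')^3}{\rho^3},
\]
which I would bound summand by summand.

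The essential observation is that $\rho'$ admits two complementary bounds: a \emph{geometric} one, $|\rho'| = 2|\langle p - \gamma_h,\gamma_h'\rangle| \leq 2|p - \gamma_h(\theta)|\,\sqrt{1-h^2}$, obtained by Cauchy--Schwarz using $|\gamma_h'|=\sqrt{1-h^2}$; and an \emph{algebraic} one, $|\rho'|,|\rho''| \leq 2\sqrt{1-h^2}\,\sqrt{a^2+b^2} = 2\sqrt{1-h^2}\,\sqrt{1-c^2}$, coming from $|a\cos\theta\pm b\sin\theta|\le\sqrt{a^2+b^2}$. Writing $d = |p-\gamma_h(\theta)|$ so that $\rho = d^2$: for the first term I use the geometric bound $|\rho'''|=|\rho'|\leq 2\sqrt{1-h^2}\,d$ to obtain $|\rho'''|/(2\rho) \leq \sqrt{1-h^2}/d$; for the middle term I combine the geometric bound on $|\rho'|$ with the algebraic bound on $|\rho''|$ to obtain $3|\rho'||\rho''|/(2\rho^2) \leq 6(1-h^2)\sqrt{1-c^2}/d^3$; for the last term the geometric bound on $|\rho'|$ used three times gives $|\rho'|^3/\rho^3 \leq 8(1-h^2)^{3/2}/d^3$. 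Summing these three contributions reproduces exactly $\sqrt{1-h^2}/d + R(c,h)/d^3$ with $R(c,h) = 6\sqrt{1-c^2}(1-h^2)+8(1-h^2)^{3/2}$.

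The computation is entirely elementary; the only subtle point---and the main ``obstacle'' if any---is choosing which of the two bounds for $|\rho'|$ (and the algebraic bound for $|\rho''|$) to use in each term so that the exact form of $R(c,h)$ emerges rather than a looser estimate. In particular, the identity $\rho'''=-\rho'$ is what makes the first term produce only the sharp $\sqrt{1-h^2}/d$ instead of the cruder $1/d$ that would result from estimating $|\rho'''|$ by $|\rho|$ trivially.
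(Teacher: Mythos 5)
Your proposal is correct and is essentially the paper's own argument in slightly different notation: since $\rho(\theta)=|p-\gamma_h(\theta)|^2=2-2\langle p,\gamma_h(\theta)\rangle$, your quantities $\rho',\rho'',\rho'''$ are exactly $-2\langle p,\gamma_h'\rangle$, $-2\langle p,\gamma_h''\rangle$, $-2\langle p,\gamma_h'''\rangle$, your three-term formula for $L'''$ coincides with the paper's expression for $F'''$, and the two bounds you use (Cauchy--Schwarz giving $|\rho'|\le 2|p-\gamma_h|\sqrt{1-h^2}$, and the trivial bound $|\rho''|\le 2\sqrt{1-h^2}\sqrt{1-c^2}$ together with $\gamma_h'''=-\gamma_h'$) are precisely the estimates \eqref{eq:normas} and \eqref{eq:2der2}. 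Nothing is missing; the constants $6$ and $8$ in $R(c,h)$ come out exactly as in the paper.
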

\begin{proof}
We can assume that $p=(\sqrt{1-c^2},0,c)$ and denote $\gamma_h=\gamma$. Let 
$F(\theta)=\log|p-\gamma(\theta)|$ and note that, as $\langle 
\gamma'(\theta),\gamma(\theta)\rangle=0$
\[
F'(\theta)=-\frac{\langle 
p-\gamma(\theta),\gamma'(\theta)\rangle}{|p-\gamma(\theta)|^2}=-\frac{\langle 
p,\gamma'(\theta)\rangle}{|p-\gamma(\theta)|^2},
\]
\begin{equation}\label{eq:2der}
F''(\theta)=-\frac{\langle 
p,\gamma''(\theta)\rangle}{|p-\gamma(\theta)|^2}-\frac{2\langle 
p,\gamma'(\theta)\rangle^2}{|p-\gamma(\theta)|^4},
\end{equation}
and
\[
F'''(\theta)=-\frac{\langle 
p,\gamma'''(\theta)\rangle}{|p-\gamma(\theta)|^2}-\frac{6\langle 
p,\gamma''(\theta)\rangle\langle 
p,\gamma'(\theta)\rangle}{|p-\gamma(\theta)|^4}-\frac{8\langle 
p,\gamma'(\theta)\rangle^3}{|p-\gamma(\theta)|^6}.
\]
Now,
 \begin{align}            \label{eq:normas}
  |\langle p,\gamma'(\theta)\rangle| & =
|\langle p-\langle 
p,\gamma(\theta)\rangle\gamma(\theta),\gamma'(\theta)\rangle| \nonumber
\\
&
\leq 
\sqrt{1-\langle p,\gamma(\theta)\rangle^2}|\gamma'(\theta)|\leq 
|p-\gamma(\theta)|\sqrt{1-h^2},
 \end{align}
and since $\gamma'''=-\gamma'$ the same bound holds changing $\gamma'$ to 
$\gamma'''$. Finally, note that
\begin{equation}\label{eq:2der2}
|\langle p,\gamma''(\theta)\rangle|\leq\sqrt{1-c^2}\sqrt{1-h^2},
\end{equation}
and the lemma follows.
\end{proof}




\begin{lemma}[Comparison of the finite sum with the integral along the parallel]\label{lem:pointstoparallels}
Assume that $\dist(p,Q_h)\gtrsim \sqrt{1-h^2}/A.$ Let $q_i\in Q_h$
for $i=1,\dots , A$ be points at angular distance $2\pi/A.$ 
Then
\begin{align}						\label{bound_comparison1}
\big|  \sum_{i=1}^A   &   \log |p-q_i |  -Af_p(h)\big|  
\\ 
&
\lesssim \frac{1}{A^2}\left(\sqrt{1-h^2}
\int_{0}^{2\pi}\frac{1}{ |p-\gamma_h(\theta) |}\, d\theta +R(c,h)\int_{0}^{2\pi}\frac{1}{ |p-\gamma_h(\theta) |^3}\,d\theta \right). \nonumber
\end{align}
Moreover, if $B\supseteq Q_h$ is a band of height $\epsilon\lesssim (1-h^2)/A$ and such that $d(p,B)\gtrsim \frac{\sqrt{1-h^2}}{A}$ then
\begin{equation*}
 \int_{0}^{2\pi}\frac{1}{ |p-\gamma_h(\theta) |}\, d\theta\eqsim \frac{1}{\sigma(B)}\int_{B}\frac{1}{ |p-q |}\, d\sigma(q)
\end{equation*}
and
\begin{equation*}
 \int_{0}^{2\pi}\frac{1}{ |p-\gamma_h(\theta) |^3}\, d\theta\eqsim \frac{1}{\sigma(B)}\int_{B}\frac{1}{ |p-q |^3}\, d\sigma(q),
\end{equation*}
where the constants are independent of $h$ and $A.$
\end{lemma}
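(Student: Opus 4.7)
The plan is to reduce the first inequality to a standard quadrature error bound for smooth periodic functions, and the second to a routine Fubini-plus-triangle-inequality argument. Set $F(\theta)=\log|p-\gamma_h(\theta)|$: since $p\notin Q_h$, $F$ is $C^\infty$ and $2\pi$-periodic, and its third derivative is controlled pointwise by Lemma~\ref{lem:deri2}.

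For the first claim, I would first prove the auxiliary quadrature inequality
\[
\left|\sum_{i=1}^A F(\theta_i)-A f_p(h)\right|\lesssim \frac{1}{A^{2}}\int_0^{2\pi}|F'''(\theta)|\,d\theta
\]
valid for any smooth $2\pi$-periodic $F$ and equispaced nodes $\theta_i=2\pi i/A$. The cleanest route is Poisson summation: after three integrations by parts (all boundary terms cancel by periodicity),
\[
|\hat F(k)|\le \frac{1}{2\pi |k|^{3}}\int_0^{2\pi}|F'''(\theta)|\,d\theta,
\qquad \hat F(k)=\frac{1}{2\pi}\int_0^{2\pi}F(\theta)e^{-ik\theta}\,d\theta,
\]
while the orthogonality identity $\sum_{i=1}^A e^{ik\theta_i}=A\cdot \mathbf{1}_{A\mid k}$ yields
\[
\sum_{i=1}^A F(\theta_i)-A f_p(h)=A\sum_{m\in \Z\setminus\{0\}}\hat F(Am).
\]
Combining these two observations and summing the convergent series $\sum_{m\neq 0}|Am|^{-3}\lesssim A^{-3}$ gives the bound. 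Substituting the pointwise estimate of Lemma~\ref{lem:deri2} produces exactly the right-hand side of \eqref{bound_comparison1}.

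For the second claim, I would parametrize $B$ in latitude--longitude coordinates $(h',\theta)\mapsto \gamma_{h'}(\theta)$ with $h'$ ranging over an interval of length $\epsilon$ around $h$; then $d\sigma=(4\pi)^{-1}\,dh'\,d\theta$ and $\sigma(B)=\epsilon/2$. The geometric estimate $|\gamma_{h'}(\theta)-\gamma_h(\theta)|\lesssim |h'-h|/\sqrt{1-h^{2}}$ combined with the hypotheses $\epsilon\lesssim(1-h^{2})/A$ and $\dist(p,B)\gtrsim\sqrt{1-h^{2}}/A$ gives $|\gamma_{h'}(\theta)-\gamma_h(\theta)|\le c\,\dist(p,B)$ with $c$ as small as desired (after adjusting universal constants). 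The triangle inequality then forces $|p-\gamma_{h'}(\theta)|\eqsim|p-\gamma_h(\theta)|$ uniformly in $(h',\theta)$, and Fubini's theorem converts each band integral into $\sigma(B)$ times the corresponding parallel integral, up to a bounded multiplicative constant, for both $k=1$ and $k=3$.

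The main technical point, and the one I would pay most attention to, is the need for an $L^{1}$ bound of $|F'''|$ in Part 1: the pointwise bound from Lemma~\ref{lem:deri2} blows up near the $\theta_0$ where $\gamma_h(\theta_0)$ is closest to $p$, so a naive $L^\infty$ estimate would be far too weak, whereas the integrated estimate matches the right-hand side of the claim and is precisely what Poisson summation delivers (the hypothesis $\dist(p,Q_h)\gtrsim\sqrt{1-h^{2}}/A$ is what keeps that $L^{1}$ integral finite and controls its dependence on $A$). An Euler--Maclaurin argument with periodic Bernoulli kernels would be an alternative but is slightly fussier to set up.
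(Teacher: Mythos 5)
Your argument is correct, and for the quadrature estimate it takes a genuinely different route from the paper. The paper handles \eqref{bound_comparison1} locally: it applies the corrected midpoint rule of Lemma~\ref{lem:midpoint} on each arc $I_i$ centered at $q_i$, uses periodicity of $\phi'$ to kill the aggregated correction term $\frac{\pi}{12A}\int_0^{2\pi}\phi''$, and then converts each local supremum of $|\phi'''|$ into $A$ times the integral of $|\phi'''|$ over $I_i$ --- this last step is exactly where the separation hypothesis $\dist(p,Q_h)\gtrsim\sqrt{1-h^2}/A$ enters, via the comparability $|p-\gamma_h(\theta)|\eqsim|p-\gamma_h(\theta')|$ for $\theta,\theta'\in I_i$. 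Your Poisson-summation argument reaches the same $L^1(|F'''|)/A^2$ bound globally in one stroke, with no sup-to-integral conversion, so it in fact proves \eqref{bound_comparison1} for every $p\notin Q_h$ without the separation hypothesis; that is a mild strengthening, and your identification of the $L^1$ (rather than $L^\infty$) control of $F'''$ as the crux is exactly right. For the band-to-parallel comparison your argument coincides with the paper's. One small inaccuracy there: the hypotheses give $|\gamma_{h'}(\theta)-\gamma_h(\theta)|\le c\,\dist(p,B)$ only for some \emph{fixed} universal $c$, not ``as small as desired''; but this does not matter, since both $|p-\gamma_{h'}(\theta)|$ and $|p-\gamma_h(\theta)|$ are bounded below by $\dist(p,B)$ (both points lie in $B$), so the triangle inequality still yields $|p-\gamma_{h'}(\theta)|\le(1+c)|p-\gamma_h(\theta)|$ and vice versa, which is all the $\eqsim$ requires.
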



\begin{proof} Without loss of generality, we can assume that $h\geq0$ and $q_i=\gamma_h(\theta_i)$ with 
$\theta_i=(2i-1)\pi/A.$ Define the periodic function $\phi(\theta)=\log |p-\gamma_h(\theta)|.$ Since $\phi'(\theta)$ is also periodic
(\ref{bound_comparison1}) equals
\begin{align*}			
\left|\sum_{i=1}^A  \right. & \left.  \phi(\theta_i)  -A \int_{0}^{2\pi}\phi(\theta)\,
\frac{d\theta}{2\pi}+\frac{\pi}{12A}\int_{0}^{2\pi}\phi''(\theta)\,d\theta\right|
  \\
&
\le
\sum_{i=1}^A\left|\phi(\theta_i)-\frac{A}{2\pi}\int_{I_i}\phi(\theta)\,d\theta+
\frac{\pi}{12A}\int_{I_i}\phi''(\theta)\,d\theta\right| 
\\
&
\lesssim
\frac{1}{A^3}\sum_{i=1}^A\sup_{\theta\in I_i}
|\phi'''(\theta)|\le 
\frac{1}{A^3}\sum_{i=1}^A \sup_{\theta\in I_i} \left(  \frac{\sqrt{1-h^2}}{
 |p-\gamma_h(\theta) |}+\frac{R(c,h)}{|p-\gamma_h(\theta) |^3} \right)
\end{align*}
by Lemma~\ref{lem:midpoint} and Lemma \ref{lem:deri2} where $I_i=[\theta_i-\pi/A,\theta_i+\pi/A].$
Let $\theta,\theta' \in I_i$ be two points were 
$|p-\gamma_h(\cdot)|$ attains respectively its minimum and its maximum value. 
Then,
\begin{align*}
|p-\gamma_h(\theta')|\leq & |p-\gamma_h(\theta) |+ |\gamma_h(\theta)-\gamma_h( 
\theta') |\leq |p-\gamma_h(\theta)|+\frac{2\pi\sqrt{1-h^2}}{A}
\\
\lesssim
& 
|p-\gamma_h(\theta) |+ \dist(p,Q_h) \lesssim |p-\gamma_h(\theta) |,
\end{align*}
and 
$$\sup_{\theta\in I_i}\left(  \frac{\sqrt{1-h^2}}{
 |p-\gamma_h(\theta) |}+\frac{R(c,h)}{|p-\gamma_h(\theta) |^3} \right) \lesssim
A \left(\int_{I_i} \frac{\sqrt{1-h^2}}{|p-\gamma_h(\theta) |}d\theta +\int_{I_i} \frac{R(c,h)}{|p-\gamma_h(\theta) |^3}d\theta\right). $$

Now we prove the second part of the lemma. Assume that the band $B$ is the set contained between $Q_{h_0}$ and $Q_{h_0+2\epsilon}$. 
For $q\in B$ let $q'\in Q_h$ be the closest point to $q$ in $Q_h$. Then, from Lemma \ref{lem:dist} we have that $|q-q'|\lesssim\epsilon/\sqrt{1-h^2}$ and hence
$$|p-q|\le |p-q'|+|q'-q|\lesssim |p-q'|+\frac{\sqrt{1-h^2}}{A}\lesssim |p-q'|,$$
and similarly
$$
|p-q'|\le |p-q|+|q'-q|\lesssim |p-q|+\frac{\sqrt{1-h^2}}{A}\lesssim  |p-q|.
$$
In other words, we have $|p-q|\eqsim |p-q'|$ and therefore
$$\int_B \frac{1}{|p-q|}d\sigma(q)=\frac1{4\pi}\int_{h_0}^{h_0+2\epsilon}  \int_{0}^{2\pi} \frac{1}{|p-\gamma_t(\theta)|}d\theta dt
\eqsim  \epsilon \int_{0}^{2\pi} \frac{1}{|p-\gamma_h(\theta)|}d\theta,$$
and we conclude the result after an identical reasoning for the integral of $|p-q|^{-3}$.

\end{proof}


\begin{lemma}[Computation of the integral along one 
parallel]\label{lem:lineintegral}
Let $p=(a,b,c)\in \S$. Then,
\[
f_p(h)
=\frac12\log(1-hc+|h-c|)=\left\{
\begin{array}{ll}
\frac{1}{2}(\log(1+h)+\log(1-c))  & \mbox{if } h\ge c, \\ \\
\frac{1}{2}(\log(1-h)+\log(1+c)) & \mbox{if } h< c.
\end{array}
\right.
\]
\end{lemma}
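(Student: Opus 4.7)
The plan is a direct computation that reduces the integral to a classical one-dimensional evaluation. First I would exploit the rotational symmetry of $Q_h$ about the $z$-axis, which shows that $f_p(h)$ depends on $p=(a,b,c)$ only through the height $c$; so without loss of generality I take $p=(\sqrt{1-c^2},0,c)$. A straightforward expansion of $|p-\gamma_h(\theta)|^2$, followed by the identity $\sin^2\theta+\cos^2\theta=1$, then collapses to
$$|p-\gamma_h(\theta)|^2 = 2(1-hc)-2\sqrt{(1-c^2)(1-h^2)}\,\cos\theta.$$

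Next I would set $A=1-hc$ and $B=\sqrt{(1-c^2)(1-h^2)}$ and record the algebraic identity $A^2-B^2=(h-c)^2$, so in particular $A\ge B\ge 0$. The lemma then reduces to the classical evaluation
$$\int_0^{2\pi}\log(A-B\cos\theta)\,\frac{d\theta}{2\pi}=\log\frac{A+\sqrt{A^2-B^2}}{2},$$
valid whenever $A\ge B\ge 0$. The cleanest way to obtain this identity is to factor $A-B\cos\theta=\frac{B}{2\rho}|1-\rho e^{i\theta}|^2$, where $\rho=(A-\sqrt{A^2-B^2})/B\in[0,1]$ is the smaller root of $B\rho^2-2A\rho+B=0$, and then to invoke the mean value identity $\int_0^{2\pi}\log|1-\rho e^{i\theta}|\,d\theta/(2\pi)=0$ for $|\rho|\le 1$, which is a trivial instance of Jensen's formula on the unit disk.

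Putting the two steps together gives the unified form $2f_p(h)=\log(1-hc+|h-c|)$, and the stated case distinction drops out from the elementary factorizations $(1+h)(1-c)=1-hc+(h-c)$ when $h\ge c$, and $(1-h)(1+c)=1-hc+(c-h)$ when $h<c$. I do not anticipate any genuine obstacle here: once the rotational reduction is made, the argument is purely algebraic, and the only analytic input is the Poisson--Jensen mean value identity, which is entirely standard.
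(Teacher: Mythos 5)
Your argument is correct and complete. The paper itself gives no proof at all here: it simply cites the entry 4.224.9 in Gradshteyn--Ryzhik, which is precisely the evaluation $\int_0^{\pi}\log(A-B\cos\theta)\,d\theta=\pi\log\bigl(\tfrac{A+\sqrt{A^2-B^2}}{2}\bigr)$ that you rederive. So what you have written is, in effect, a self-contained proof of the cited table entry followed by the same algebra the authors leave implicit. All the individual steps check out: the reduction to $p=(\sqrt{1-c^2},0,c)$ by rotational invariance, the expansion $|p-\gamma_h(\theta)|^2=2(1-hc)-2\sqrt{(1-c^2)(1-h^2)}\cos\theta$, the identity $A^2-B^2=(h-c)^2$, the factorization $A-B\cos\theta=\tfrac{B}{2\rho}|1-\rho e^{i\theta}|^2$ with $\rho$ the root of $B\rho^2-2A\rho+B=0$ inside the closed unit disk (the two roots have product $1$, so the smaller one is indeed $\le 1$), and the mean-value identity for $\log|1-\rho e^{i\theta}|$. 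The final case split via $(1+h)(1-c)=1-hc+(h-c)$ and $(1-h)(1+c)=1-hc+(c-h)$ is also right. The only points worth a sentence in a final write-up are the degenerate cases: when $B=0$ the integral is trivially $\log A$ and agrees with the formula, and when $h=c$ one has $\rho=1$, where the mean-value identity still holds because $\log|1-e^{i\theta}|$ is integrable with mean zero (or one can argue by continuity in $\rho$). Your approach buys self-containedness at the cost of a page; the paper's citation buys brevity at the cost of sending the reader to a table of integrals.
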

\begin{proof}
See \cite[4.224.9]{LosRusos}.
\end{proof}

\begin{lemma}\label{lem:lineintegralcuad}
	Let $p=(a,b,c)\in \S$.
	The following equality holds
	\[
	\int_{0}^{2\pi}\frac{1}{|p-\gamma_h(\theta) |^2}\,\frac{d\theta}{2\pi}
	=\frac{1}{2|h-c|}.
	\]

\end{lemma}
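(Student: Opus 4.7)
The plan is to reduce the integrand to a standard form and apply the classical integral
$\int_0^{2\pi}\frac{d\theta}{A-B\cos\theta}=\frac{2\pi}{\sqrt{A^2-B^2}}$ valid for $A>|B|\ge0$.

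First I would expand the squared distance using the fact that $p$ and $\gamma_h(\theta)$ both lie on the unit sphere:
\[
|p-\gamma_h(\theta)|^2=2-2\langle p,\gamma_h(\theta)\rangle
=2-2ch-2\sqrt{(1-c^2)(1-h^2)}(a_0\cos\theta+b_0\sin\theta),
\]
where $(a_0,b_0)=(a,b)/\sqrt{a^2+b^2}$ when $a^2+b^2\ne0$. Since the integrand is $2\pi$-periodic and the change of variables $\theta\mapsto\theta+\theta_0$ leaves the integral invariant, we may rotate and assume $b=0$, $a=\sqrt{1-c^2}\ge0$; in the degenerate case $a^2+b^2=0$ (i.e.\ $c=\pm1$) the statement is trivial since then $|p-\gamma_h(\theta)|^2=2(1\mp h)$ is independent of $\theta$ and the identity reduces to $\tfrac{1}{2(1\mp h)}=\tfrac{1}{2|h\mp 1|}$. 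So the remaining task is to evaluate
\[
\int_0^{2\pi}\frac{1}{A-B\cos\theta}\,\frac{d\theta}{2\pi}
\]
with $A=2-2ch$ and $B=2\sqrt{(1-c^2)(1-h^2)}$.

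Next I would verify $A>|B|$ (so the standard integral applies): expanding gives
\[
A^2-B^2=4(1-ch)^2-4(1-c^2)(1-h^2)=4\bigl(c^2-2ch+h^2\bigr)=4(h-c)^2,
\]
which is nonnegative and strictly positive unless $h=c$ (in which case $p\in Q_h$ and the integral diverges, matching the right-hand side). Thus $\sqrt{A^2-B^2}=2|h-c|$ and the standard formula yields
\[
\int_0^{2\pi}\frac{1}{|p-\gamma_h(\theta)|^2}\,\frac{d\theta}{2\pi}=\frac{1}{\sqrt{A^2-B^2}}=\frac{1}{2|h-c|},
\]
as required.

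There is no real obstacle here; the only thing to be careful about is the rotational reduction to $b=0$ and noting that the argument is vacuous on the parallel itself (where both sides are infinite). The identity $A^2-B^2=4(h-c)^2$ is the key algebraic simplification, and it is essentially the same factorization that appears implicitly in Lemma~\ref{lem:lineintegral} and in the stereographic identity $1-\langle p,q\rangle=\tfrac12|p-q|^2$.
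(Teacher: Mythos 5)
Your proof is correct and follows essentially the same route as the paper: reduce by rotation to $p=(\sqrt{1-c^2},0,c)$, invoke the classical integral $\int_0^{2\pi}\frac{d\theta}{A-B\cos\theta}=\frac{2\pi}{\sqrt{A^2-B^2}}$ (the paper cites Gradshteyn--Ryzhik 3.661.4), and expand $A^2-B^2=4(h-c)^2$. Your extra attention to the degenerate cases $c=\pm1$ and $h=c$ is a minor refinement the paper omits.
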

\begin{proof}
	From \cite[3.661.4]{LosRusos} we have
	\begin{align*}
	\int_{0}^{2\pi}\frac{1}{|p-\gamma_h(\theta) |^2}\,\frac{d\theta}{2\pi}=&\int_{0}^{\pi}\frac{1}{2-2ch-2\sqrt{1-h^2}\sqrt{1-c^2}\,\cos\theta}\,\frac{d\theta}{\pi}\\=&\frac{1}{\sqrt{\left(2-2ch\right)^2-\left(2\sqrt{1-h^2}\sqrt{1-c^2}\right)^2}},
	\end{align*}
and the lemma follows after expanding the denominator.
\end{proof}


\begin{lemma}[Comparison of integrals on parallels and bands]\label{lem:comparsionparalelband}
Let $B$ be the band containing $Q_h$ given by $B=\{q\in\S:\langle 
q,e_3\rangle\in[h-\epsilon,h+\epsilon]\}.$ Assume that $h-\epsilon,h+\epsilon\in(-1,1)$ and let $p\in\S\setminus B$. Then
\begin{equation} 		\label{parallelband1}
\begin{split}
\left|f_p(h)-\frac{1}{\epsilon}\int_{B}\log |p-w |\,d\sigma(w)\right|\lesssim
\frac{\epsilon^2}{(1-\max(|h-\epsilon|,|h+\epsilon|)^2)^2},
\end{split}
\end{equation}
and
\begin{equation}    \label{parallelband2}
\begin{split}
\left|\frac{f_p(h-\epsilon)+4 f_p (h)+f_p (h+\epsilon)}{6} -\frac{1}{\epsilon}\int_{B}\log |p-w |\,d\sigma(w)\right|
\lesssim
\frac{\epsilon^4}{(1-\max(|h-\epsilon|,|h+\epsilon|)^2)^4}.
\end{split}
\end{equation}
\end{lemma}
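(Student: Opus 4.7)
The plan is to reduce both inequalities to one-dimensional quadrature error estimates applied to the function $f_p$, exploiting the explicit formula given by Lemma~\ref{lem:lineintegral}.

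First, I would rewrite the band integral in terms of $f_p$. Since $\sigma$ is the normalized surface measure, the coarea formula gives the factorization $d\sigma = \tfrac{1}{2}\,dt\cdot\tfrac{d\theta}{2\pi}$ when $B$ is parametrized by the height $t\in[h-\epsilon,h+\epsilon]$ and the longitude $\theta\in[0,2\pi]$. Hence
\[
\int_B \log|p-w|\,d\sigma(w) = \tfrac{1}{2}\int_{h-\epsilon}^{h+\epsilon} f_p(t)\,dt,
\]
so that $\tfrac{1}{\epsilon}\int_B \log|p-w|\,d\sigma(w)$ is precisely the average of $f_p$ over $[h-\epsilon,h+\epsilon]$. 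Thus \eqref{parallelband1} and \eqref{parallelband2} become exactly the midpoint rule error and the Simpson rule error for $f_p$ on that interval.

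Next, I would use Lemma~\ref{lem:lineintegral} to bound the derivatives of $f_p$. That lemma shows that on either side of $c$, $f_p(t)$ equals $\tfrac{1}{2}\log(1+t)$ or $\tfrac{1}{2}\log(1-t)$ up to an additive constant depending only on $c$. Since $p\in\S\setminus B$ forces $c\notin[h-\epsilon,h+\epsilon]$, a single one of these two formulas applies throughout the interval, so $f_p$ is real-analytic there and
\[
|f_p^{(k)}(t)| = \frac{(k-1)!}{2(1\pm t)^k} \leq \frac{(k-1)!}{2(1-|t|)^k}.
\]
Writing $H=\max(|h-\epsilon|,|h+\epsilon|)$ we have $|t|\leq H$ for $t\in[h-\epsilon,h+\epsilon]$, and the elementary inequality $1-|t|\geq \tfrac{1}{2}(1-|t|^2)\geq \tfrac{1}{2}(1-H^2)$ yields
\[
\sup_{t\in[h-\epsilon,h+\epsilon]}|f_p^{(k)}(t)|\ \lesssim\ (1-H^2)^{-k}.
\]

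Finally, I would invoke the standard quadrature error identities: for $f\in C^2[a,b]$,
\[
f\!\left(\tfrac{a+b}{2}\right) - \frac{1}{b-a}\int_a^b f(t)\,dt = \frac{(b-a)^2}{24}f''(\xi),
\]
and for $f\in C^4[a,b]$,
\[
\frac{f(a)+4f\!\left(\tfrac{a+b}{2}\right)+f(b)}{6} - \frac{1}{b-a}\int_a^b f(t)\,dt = \frac{(b-a)^4}{2880}f^{(4)}(\eta),
\]
applied with $a=h-\epsilon$, $b=h+\epsilon$ (so $b-a=2\epsilon$), and combined with the derivative bounds for $k=2$ and $k=4$ respectively. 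This delivers exactly the estimates $\epsilon^2/(1-H^2)^2$ and $\epsilon^4/(1-H^2)^4$ with absolute constants. No substantive obstacle is expected: the only subtlety is verifying that the hypothesis $p\notin B$ keeps $c$ out of the integration interval so that $f_p$ is smooth there and the piecewise formula of Lemma~\ref{lem:lineintegral} can be differentiated freely.
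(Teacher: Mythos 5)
Your proof is correct and follows essentially the same route as the paper's: rewrite the band average as $\tfrac{1}{2\epsilon}\int_{h-\epsilon}^{h+\epsilon} f_p(t)\,dt$, use the explicit formula of Lemma~\ref{lem:lineintegral} (valid throughout the interval since $p\notin B$ keeps $c$ outside it) to bound $f_p''$ and $f_p^{(4)}$, and invoke the midpoint and Simpson error estimates of Lemma~\ref{lem:midpointbasico}, together with $1-H\eqsim 1-H^2$. The only difference is that you spell out the derivative bounds that the paper leaves implicit.
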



\begin{proof}
Using that
\[
\frac{1}{\epsilon}\int_{B}\log |p-w |\,d\sigma(w)=\frac{1}{2\epsilon}\int_{h-\epsilon}^{h+\epsilon}f_p(t) dt,
\]
and Lemma~\ref{lem:lineintegral},
the results follows from the error estimation for the midpoint integral rule
and for the Simpson rule, see Lemma \ref{lem:midpointbasico}. Note that we are also using
\[
1-\max(|h-\epsilon|,|h+\epsilon|)\eqsim 1-\max(|h-\epsilon|,|h+\epsilon|)^2.
\]
\end{proof}


\begin{lemma}[Comparison of the integrals on the parallel and the band: the 
case 
that the band contains the point $p=(a,b,c)$]\label{lem:comparsionparalelband2}
Let $B$ be the band containing $Q_h$ given by $B=\{q\in\S:\langle 
q,e_3\rangle\in[h-\epsilon,h+\epsilon]\}$. 
Here, we are assuming that $h-\epsilon,h+\epsilon\in(-1,1)$. Then, if 
$h-\epsilon\leq c\leq h+\epsilon$,
\begin{equation} 		\label{parallelband1point}
\begin{split}
\left|
f_p(h)-\frac{1}{\epsilon}
\int_{B}\log |p-w |\,d\sigma(w)\right|\lesssim \frac{\epsilon}{1-c^2}.
\end{split}
\end{equation}
and
\begin{equation}    \label{parallelband2point}
\begin{split}
\left|\frac{1}{6}\left(f_p(h-\epsilon)+4 f_p (h)+f_p (h+\epsilon)\right)-\frac{1}{\epsilon}\int_{B}\log |p-w |\,d\sigma(w)\right|\lesssim \frac{\epsilon}{1-c^2}.
\end{split}
\end{equation}
\end{lemma}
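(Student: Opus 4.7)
The plan is to exploit the explicit formula for $f_p(t)$ from Lemma~\ref{lem:lineintegral},
\[
f_p(t) = \tfrac12\log(1-tc+|t-c|) =
\begin{cases} \tfrac12\log(1-c) + \tfrac12\log(1+t), & t \geq c, \\ \tfrac12\log(1+c) + \tfrac12\log(1-t), & t \leq c, \end{cases}
\]
which shows that $f_p$ is continuous on $[h-\epsilon,h+\epsilon]$ and smooth away from a single interior kink at $t=c$. Differentiating on each side gives $f_p'(t) = 1/(2(1+t))$ for $t>c$ and $f_p'(t) = -1/(2(1-t))$ for $t<c$; on each side $|f_p'|$ is maximized at the endpoint closest to $c$, so
\[
|f_p'(t)| \leq \max\!\left(\tfrac{1}{2(1+c)},\tfrac{1}{2(1-c)}\right) = \tfrac{1}{2(1-|c|)} \leq \tfrac{1}{1-c^2}
\]
at every $t \in [h-\epsilon, h+\epsilon]\setminus\{c\}$, using $1-c^2=(1-|c|)(1+|c|)\le 2(1-|c|)$. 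Being continuous and piecewise $C^1$, $f_p$ is therefore Lipschitz on $[h-\epsilon,h+\epsilon]$ with constant $\lesssim 1/(1-c^2)$.

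Once this Lipschitz bound is in hand, both estimates are direct. Recalling from the proof of Lemma~\ref{lem:comparsionparalelband} that
\[
\tfrac{1}{\epsilon}\int_B \log|p-w|\,d\sigma(w) = \tfrac{1}{2\epsilon}\int_{h-\epsilon}^{h+\epsilon} f_p(t)\,dt,
\]
estimate \eqref{parallelband1point} follows from
\[
\left|f_p(h) - \tfrac{1}{2\epsilon}\int_{h-\epsilon}^{h+\epsilon} f_p(t)\,dt\right| \leq \sup_{t \in [h-\epsilon,h+\epsilon]} |f_p(h)-f_p(t)| \lesssim \tfrac{\epsilon}{1-c^2}.
\]
For \eqref{parallelband2point} I would write the Simpson combination as $f_p(h) + \tfrac{1}{6}\bigl[(f_p(h-\epsilon)-f_p(h)) + (f_p(h+\epsilon)-f_p(h))\bigr]$, apply the same Lipschitz bound to each of the two parenthesized differences to control them by $\epsilon/(1-c^2)$, and then combine with \eqref{parallelband1point} via the triangle inequality.

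The only conceptual issue, relative to Lemma~\ref{lem:comparsionparalelband}, is the failure of smoothness of $f_p$ at $t=c$: the kink lying inside the band prevents any $O(\epsilon^2)$ or $O(\epsilon^4)$ quadrature gain from Lemma~\ref{lem:midpointbasico}, and this is exactly why the rate drops to $O(\epsilon)$. Once this loss of accuracy is accepted, no serious obstacle remains.
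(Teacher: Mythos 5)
Your proof is correct and follows essentially the same route as the paper: both proofs reduce the two estimates to the observation that $f_p$, while nonsmooth at $t=c$, is Lipschitz on $[h-\epsilon,h+\epsilon]$ with constant $\lesssim 1/(1-c^2)$, read off from the explicit formula in Lemma~\ref{lem:lineintegral}. The only cosmetic difference is in \eqref{parallelband2point}: you split the Simpson combination as $f_p(h)+\tfrac16[(f_p(h-\epsilon)-f_p(h))+(f_p(h+\epsilon)-f_p(h))]$ and apply the triangle inequality, whereas the paper decomposes Simpson's rule into midpoint plus trapezoidal and bounds the trapezoidal error via the Lipschitz constants of $f_p$ and of the secant line; both are just reorganizations of the same one-step Lipschitz estimate.
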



\begin{proof}
As in the proof of Lemma~\ref{lem:comparsionparalelband}, note that
\[
\frac{1}{\epsilon}\int_{B}\log |p-w | d\sigma(w)=
\frac{1}{2\epsilon}\int_{h-\epsilon}^{h+\epsilon}f_p(t)dt
=f_p(h)+\frac{1}{2\epsilon}\int_{h-\epsilon}^{h+\epsilon}(f_p(t)-f_p(h))\,dt.
\]
Then, the quantity in (\ref{parallelband1point}) can be bounded by $2 \epsilon$ times the Lipschitz constant $L_{f_p}$ of $f_p.$ 
By Lemma~\ref{lem:lineintegral}  
$$L_{f_p}\lesssim \max \left\{  \sup_{t\in [h-\epsilon,c]}\frac{1}{1-t} ,\sup_{t\in [c,h+\epsilon]}\frac{1}{1+t}  \right\}\lesssim
\frac{1}{1-c}+ \frac{1}{1+c}$$
and (\ref{parallelband1point}) follows.

In (\ref{parallelband2point}) we decompose the Simpson's rule in the midpoint and the trapezoidal rules. For the midpoint
we do as before. For the trapezoidal rule let $\ell(t)$ the line through $(h-\epsilon,f(h-\epsilon))$ and  $(h+\epsilon,f(h+\epsilon)).$
To estimate
$$\frac{1}{2\epsilon}\int_{h-\epsilon}^{h+\epsilon} (\ell(t)-f_p(t)) dt,$$
we use that for  $h-\epsilon\le t \le h+\epsilon$
$$|\ell(t)-f_p(t)|\lesssim (L_{\ell}+L_f) \epsilon$$
and clearly $L_\ell \le L_f.$
\end{proof}



\section{The proof of Theorem \ref{th:multiplier}} \label{sec:final}

The strategy to prove Theorem \ref{th:multiplier} will follow two steps. First we 
approximate the potential generated by the surface measure in $\S$ by a potential 
generated by a multiple of the length-measure supported in several chosen 
parallels $Q_{h_j}$ and $Q_{H_j}.$ Then, we compare the potential in parallels 
with the discrete potential given by the points in $\mathcal P_N.$ We follow the notation from
Section \ref{sec:construction}.

\subsection{From bands to parallels}

We show that, given $p\in\S$, the mean value of $N\log|p-q|$ for $q\in\S$ is 
comparable to the weighted sum of the mean values in different parallels $Q_{h_j},Q_{H_j}$ where the weights 
are given by the number of points that we have placed in each parallel.

\begin{proposition}									\label{bandparallel}
Let $p=(a,b,c)\in\S$ and let $\mathcal P_N$ be a collection of $N$ points as defined in 
Section~\ref{sec:construction}. 
Let
\begin{align*}
S_N  =r_1(f_p(h_1) & +f_p(h_{2M-1}))  +\sum_{j=2}^{2M-2}\left( \frac{4 \tilde{r}_j}{6}+rem_j \right)f_p(h_j)
\\
&
+\sum_{j=1}^{2M-2}\left( \frac{\tilde{r}_j
+\tilde{r}_{j+1}}{6} \right)f_p(H_j)
\end{align*}
 Then,
\[
\left|S_N-N\int_{\S }\log |p-q |\,d\sigma(q)\right|\lesssim 1.
\]

\end{proposition}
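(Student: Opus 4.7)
The plan is to partition $\S=\bigcup_{j=1}^{2M-1} B_j$, split $N\int_\S\log|p-q|\,d\sigma(q)=\sum_j N\int_{B_j}\log|p-q|\,d\sigma(q)$, and regroup the terms of $S_N$ into a portion assigned to each band so that a band-by-band comparison via Lemmas~\ref{lem:comparsionparalelband}--\ref{lem:comparsionparalelband2} yields a summable error. Explicitly, set
\[
\mathrm{Portion}_1=r_1 f_p(h_1),\qquad \mathrm{Portion}_{2M-1}=r_1 f_p(h_{2M-1}),
\]
\[
\mathrm{Portion}_j=\tfrac{\tilde r_j}{6}\bigl(f_p(H_{j-1})+4f_p(h_j)+f_p(H_j)\bigr)+rem_j\,f_p(h_j),\qquad 2\le j\le 2M-2.
\]
Using $\tilde r_1=\tilde r_{2M-1}=0$, a direct check confirms $\sum_j\mathrm{Portion}_j=S_N$.

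The key observation is that for an interior band $B_j$ of half-height $\epsilon_j:=r_j/N$, both the Simpson combination $\tfrac{1}{6}(f_p(H_{j-1})+4f_p(h_j)+f_p(H_j))$ and the midpoint value $f_p(h_j)$ approximate the \emph{same} quantity $\tfrac{1}{\epsilon_j}\int_{B_j}\log|p-w|\,d\sigma(w)$, so the portion approximates $(\tilde r_j+rem_j)/\epsilon_j\cdot\int_{B_j}=N\int_{B_j}\log|p-w|\,d\sigma(w)$. When $p\notin B_j$, Lemma~\ref{lem:comparsionparalelband} gives a Simpson error $\lesssim \tilde r_j\epsilon_j^4/(1-M_j^2)^4$ and a midpoint error $\lesssim rem_j\epsilon_j^2/(1-M_j^2)^2$, with $M_j=\max(|H_{j-1}|,|H_j|)$. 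Plugging in $r_j\eqsim j$, $\epsilon_j\eqsim j/N$, and the height estimates \eqref{eq:zjbound1}--\eqref{eq:zjbound2} that imply $1-M_j^2\gtrsim j^2/N$, these are $\lesssim 1/j^3$ and $\lesssim 1/j^2$ respectively, so their sum over $j$ is $O(1)$; the equatorial bands contribute a bounded number of even smaller terms. For the at most one interior band $B_{j_0}$ containing $p$, Lemma~\ref{lem:comparsionparalelband2} gives error $\lesssim r_{j_0}\epsilon_{j_0}/(1-c^2)=r_{j_0}^2/(N(1-c^2))$, and since $c\in[H_{j_0},H_{j_0-1}]$ forces $1-c^2\gtrsim j_0^2/N$, this is also $O(1)$.

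The main obstacle is that Lemmas~\ref{lem:comparsionparalelband}--\ref{lem:comparsionparalelband2} require $h\pm\epsilon\in(-1,1)$, which fails for the caps $B_1$ and $B_{2M-1}$ whose boundaries include $\pm 1$, so the cap contributions must be treated by hand. Fortunately $r_1\le C_2$ is bounded, so each cap contains $O(1)$ points. If $p\notin B_1$ (i.e.\ $c<H_1$), Lemma~\ref{lem:lineintegral} gives $f_p(t)=\tfrac{1}{2}(\log(1+t)+\log(1-c))$ on $[H_1,1]$, which is smooth with $|f_p''|=O(1)$, so the standard midpoint estimate yields $|\mathrm{Portion}_1-N\int_{B_1}\log|p-q|\,d\sigma|\lesssim r_1\epsilon_1^2\lesssim 1$. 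If $p\in B_1$, one splits $\int_{H_1}^1 f_p(t)\,dt$ at $t=c$ and computes both $f_p(h_1)$ and the band-average $\tfrac{1}{2\epsilon_1}\int_{H_1}^1 f_p(t)\,dt$ explicitly using Lemma~\ref{lem:lineintegral}; although each of these contains terms of order $\log\epsilon_1$ and $\log(1-c)$, the divergent parts cancel and leave an $O(1)$ difference, which multiplied by the bounded $r_1$ remains $O(1)$. The case of $B_{2M-1}$ is symmetric. Summing all band-wise errors then gives $|S_N-N\int_\S\log|p-q|\,d\sigma(q)|\lesssim 1$.
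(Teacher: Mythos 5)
Your proof is correct and follows essentially the same route as the paper. The band-by-band decomposition into Simpson and midpoint contributions with $\epsilon_j=r_j/N$, the invocation of Lemmas~\ref{lem:comparsionparalelband} and \ref{lem:comparsionparalelband2}, and the resulting summable $1/j^3$ and $1/j^2$ error bounds are exactly the paper's argument; your explicit treatment of the caps $B_1,B_{2M-1}$ reproduces the content of Lemma~\ref{lem:inthenorth}, just phrased via a direct midpoint estimate when $p\notin B_1$ and an explicit cancellation of logarithms when $p\in B_1$.
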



\begin{proof}
  Assume that $p\in B_\ell$ and $\ell \neq 1,2M-1.$ Then we can write the difference above as
  \begin{align}
   \sum_{\stackrel{j=2}{j\neq \ell}}^{2M-2} & \left[ \frac{\tilde{r}_j}{6}(f_p(H_{j-1})+4 f_p(h_j) +f_p(H_j))-\frac{\tilde{r}_j}{\sigma(B_j)}\int_{B_j}\log |p-q|d\sigma(q)\right] \label{x1}
   \\
   &
   \sum_{\stackrel{j=2}{j\neq \ell}}^{2M-2} \left[ rem_j f_p(h_j)-\frac{rem_j}{\sigma(B_j)}\int_{B_j}\log |p-q|d\sigma(q)\right]         \label{x1b}
   \\
   &
   +rem_\ell f_p(h_\ell)-\frac{rem_\ell}{\sigma(B_\ell)}\int_{B_\ell}\log |p-q|d\sigma(q)   \label{x2}
   \\
   &
   +    \frac{\tilde{r}_\ell}{6}(f_p(H_{\ell-1})+4 f_p(h_\ell) +f_p(H_\ell))-\frac{\tilde{r}_\ell}{\sigma(B_\ell)}\int_{B_\ell}\log |p-q|d\sigma(q)   \label{x3}
   \\
   &
   +r_1 (f_p(h_1)+f_p(-h_1))-\frac{r_1}{\sigma(B_1)}\int_{B_1\cup B_{2M-1}}\log |p-q|d\sigma(q).   \label{x4}
  \end{align}
  For the first sum (\ref{x1}) we use (\ref{parallelband2}) with $\epsilon=r_j/N$ and using from Lemma \ref{lem:facil2} that $j^2\eqsim r_j^2\eqsim N(1-H_j^2)\eqsim N(1-h_j^2)\eqsim N(1-H_{j-1}^2)$ we get
$$ (\ref{x1})\lesssim  \sum_{j=2}^{2M-2} \frac{r_j^5}{N^4 (1-h_j^2)^4}\lesssim \sum_{j=1}^{\infty}\frac{1}{ j^3 }\eqsim 1.$$

  For (\ref{x1b}) we apply (\ref{parallelband1}) with $\epsilon=r_j/N$ and Lemma \ref{lem:facil2}  again. Using also that $rem_j<6$ and $j^2\eqsim r_j^2\eqsim N(1-h_j^2)$ we get 
$$ (\ref{x1b})\lesssim \sum_{j=2}^{2M-2}\frac{r_j^2}{N^2(1-h_j^2)^2}\lesssim \sum_{j=1}^{\infty} \frac{1}{j^2}\eqsim 1.$$

For (\ref{x2}) and (\ref{x3}) we use (\ref{parallelband1point}) and (\ref{parallelband2point}). This, together with Lemma \ref{lem:facil2} yields
$$(\ref{x2})+(\ref{x3})\lesssim \frac{\tilde{r}_\ell}{N(1-c^2)}+\frac{\tilde{r}_\ell^2}{N(1-c^2)}\lesssim \frac{1}{\ell}+1\eqsim 1.$$

Finally, (\ref{x4})$\eqsim1$ as follows from Lemma \ref{lem:inthenorth}.
\end{proof}

%


\begin{lemma}  \label{lem:inthenorth}
 For any $p\in \S$ we have 
 $$\left| f_p(h_1) -\frac{1}{\sigma(B_1)}\int_{B_1 }\log |p-q|d\sigma(q) \right|\lesssim 1.$$
\end{lemma}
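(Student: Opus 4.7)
The plan is to compute both $f_p(h_1)$ and the band average semi-explicitly from the closed form in Lemma~\ref{lem:lineintegral}, and to extract the $O(1)$ bound through a cancellation. The hypothesis $C_1 j\le r_j\le C_2 j$ forces $r_1\eqsim 1$, so $L:=1-H_1=2r_1/N\eqsim 1/N$, and $h_1=1-L/2$ is precisely the midpoint of $[H_1,1]$. Slicing $B_1$ by parallels reduces the band average to a one-dimensional integral
\[
\frac{1}{\sigma(B_1)}\int_{B_1}\log|p-q|\,d\sigma(q)=\frac{1}{L}\int_{H_1}^1 f_p(t)\,dt,
\]
so the task is to compare $f_p(h_1)$ with this mean.

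I would then split according to the third coordinate $c$ of $p=(a,b,c)$. In the easy case $c\le H_1$ (so $p$ lies outside the cap), Lemma~\ref{lem:lineintegral} gives $f_p(t)=\tfrac12\log((1+t)(1-c))$ uniformly for $t\in[H_1,1]$; the factor $\log(1-c)$ drops out of the difference, and $\log(1+t)$ is smooth on $[H_1,1]\subset[1/2,1]$ with derivatives bounded independently of $N$. Because $h_1$ is the midpoint, the standard midpoint-rule error estimate gives the much stronger bound $|f_p(h_1)-\mathrm{avg}|\lesssim L^2\lesssim 1/N^2$.

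The substantive case is $c\in(H_1,1]$. Setting $u=1-c\in[0,L]$ and substituting $s=1-t$, Lemma~\ref{lem:lineintegral} yields
\[
\frac{1}{L}\int_0^L f_p(1-s)\,ds=\frac{1}{2L}\left[\int_0^u\log((2-s)u)\,ds+\int_u^L\log(s(2-u))\,ds\right].
\]
Since $u,s\le L\le 1$, the $\log(2-s)$ and $\log(2-u)$ factors are uniformly $O(1)$ and contribute $O(L)$ to the bracket, while the remaining piece equals $u\log u+\int_u^L\log s\,ds=u\log u+L\log L-u\log u+(u-L)=L\log L+O(L)$ after the two $u\log u$ terms cancel. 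Hence the average equals $\tfrac12\log L+O(1)$. Evaluating $f_p(h_1)$ directly in the sub-cases $u\ge L/2$ (so $c\le h_1$) and $u<L/2$ (so $c>h_1$), the argument of the logarithm is always a product of one factor $\eqsim L$ (namely $u$ or $L/2$) and one factor $\eqsim 1$, so $f_p(h_1)=\tfrac12\log L+O(1)$ as well, and the two quantities match to within $O(1)$.

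The main obstacle is precisely this second case: the Lipschitz approach of Lemma~\ref{lem:comparsionparalelband2} breaks down because its bound $\epsilon/(1-c^2)$ is unbounded as $c\to 1^-$. The $O(1)$ estimate survives only through the exact cancellation of the two $u\log u$ contributions coming from either side of the corner of $f_p$ at $t=c$, combined with the fact that $h_1$ sits at the midpoint of the cap; this cancellation must be carried out by hand rather than invoked from a general comparison lemma.
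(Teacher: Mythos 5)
Your proof is correct and follows essentially the same route as the paper: both reduce the band average to $\frac{1}{L}\int_{H_1}^1 f_p(t)\,dt$ via Lemma~\ref{lem:lineintegral}, treat $p\notin B_1$ by a direct (here, midpoint-rule) estimate, and for $p\in B_1$ split the integral at $t=c$ and compute exactly, the key point being the cancellation of the $u\log u$ terms. The paper leaves the second case as ``elementary manipulations''; you have simply written them out.
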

\begin{proof}
This follows from a direct computation. If $p\not\in B_1$ then the quantity in the lemma is
\[
\left|\frac{1}{2}\log(1+h_1)-\frac{N}{2}\log2+\frac{N}{2}\log\left(2-\frac{2r_1}{N}\right)+\frac12\log\left(2-\frac{2r_1}{N}\right)\right|\lesssim1,
\]
since $\log\left(2-\frac{2r_1}{N}\right)-\log2=\log\left(1-\frac{2r_1}{N}\right)\eqsim1/N$.
If $p\in B_1$ it is a little longer computation. One must write
\[
\int_{B_1 }\log |p-q|d\sigma(q)=\int_{1-2r_1/N}^1f_p(t)\,dt,
\]
and consider two subintervals depending on $t<\langle p,e_3\rangle$ or $t>\langle p,e_3\rangle$. Then, from Lemma \ref{lem:lineintegral} this quantity can be computed exactly and the lemma follows after some elementary manipulations.
%
%
%
\end{proof}


\subsection{From points to parallels}

In this section we prove Theorem \ref{th:multiplier}. 
 Recall that the sum for all parallels $S_N$ defined in Proposition \ref{bandparallel}.
Then,
\begin{align*}
\sum_{p_i\in \mathcal P_N} & \log |p-p_i|-S_N \nonumber
\\
&
=\sum_{p_i\in Q_{h_1}\cup Q_{h_{2M-1}}}\log |p-p_i|-r_1(f_p(h_1)  +f_p(h_{2M-1}))    
\\
&
+\sum_{j=2}^{2M-2}\left[\sum_{p_i \in Q_{h_j}} \log|p-p_i|-\left( \frac{4 \tilde{r}_j}{6}+rem_j \right)f_p(h_j) \right] 
\\
&
+\sum_{j=1}^{2M-2}\left[\sum_{p_i \in Q_{H_j}} \log|p-p_i|-  \left( \frac{\tilde{r}_j+\tilde{r}_{j+1}}{6} \right)f_p(H_j)\right]. 
\end{align*}

We will bound in a different way the terms corresponding to three situations: that the parallel ($Q_{h_j}$ or $Q_{H_j}$) is very close to $p$, moderately close to $p$ and far away from $p$.


\subsubsection{The closest parallel}
We will bound the term corresponding to the parallel containing the closest point to $p$ using the following lemma. If there is more than one parallel with this property, we can apply the lemma to any of them.
\begin{lemma}\label{lem:veryclose}
	Let $p\in\S$ and let $p_{i_0}\in \mathcal P_N$ be the closest point to $p$. Assume that $p_{i_0}\in Q_{h_\ell}$. Then
	\[
	\left|\sum_{\stackrel{p_i \in Q_{h_\ell}}{i\neq i_0}} \log|p-p_i|-\left( \frac{4 \tilde{r}_\ell}{6}+rem_\ell \right)f_p(h_\ell)-\log\sqrt{N}\right|\lesssim1.
	\]
Similarly, if $p_{i_0}\in Q_{H_\ell}$, then
	\[
\left|\sum_{\stackrel{p_i \in Q_{H_\ell}}{i\neq i_0}} \log|p-p_i|-\left( \frac{\tilde{r}_\ell+\tilde r_{\ell+1}}{6} \right)f_p(H_\ell)-\log\sqrt{N}\right|\lesssim1.
\]
\end{lemma}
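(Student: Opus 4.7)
The plan is to circumvent the failure of Lemma~\ref{lem:pointstoparallels} (whose hypothesis $\dist(p,Q_{h_\ell})\gtrsim\sqrt{1-h_\ell^2}/A$ is violated precisely because $p_{i_0}\in Q_{h_\ell}$) by deriving an exact closed-form expression for the discrete sum and then quantifying how much of its singular behavior is carried by the single term $\log|p-p_{i_0}|$. Let $A$ denote the number of equidistributed points on $Q_{h_\ell}$; in every case Lemma~\ref{lem:facil2} gives $A\eqsim\sqrt{N(1-h_\ell^2)}$. Parametrize $p_i=\gamma_{h_\ell}(\theta_i)$ with $\theta_i=\theta_0+2\pi(i-1)/A$ and rotate so the azimuth of $p$ is zero, so that $|\theta_{i_0}|\leq\pi/A$. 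Writing $B=2(1-ch_\ell)$, $C=2\sqrt{(1-c^2)(1-h_\ell^2)}$, $d=c-h_\ell$, one has $|p-p_i|^2=B-C\cos\theta_i$ and $B^2-C^2=4d^2$.

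Next, apply the Chebyshev identity $\prod_{i=1}^A(z-\cos\theta_i)=2^{1-A}\bigl(T_A(z)-\cos(A\theta_{i_0})\bigr)$ with $z=B/C=(r+r^{-1})/2$ where $r=(B+2|d|)/C\geq 1$. Combining $T_A((r+r^{-1})/2)=(r^A+r^{-A})/2$ with the identity $rC/2=1-ch_\ell+|d|=e^{2f_p(h_\ell)}$ from Lemma~\ref{lem:lineintegral} yields
\begin{equation*}
\sum_{p_i\in Q_{h_\ell}}\log|p-p_i|=Af_p(h_\ell)+\log\bigl|1-r^{-A}e^{iA\theta_{i_0}}\bigr|.
\end{equation*}
Subtracting the $i=i_0$ contribution, the lemma reduces to proving the equivalence
\begin{equation*}
\bigl|1-r^{-A}e^{iA\theta_{i_0}}\bigr|\eqsim\sqrt{N}\,|p-p_{i_0}|.
\end{equation*}

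To establish this equivalence, both sides are controlled by the same two small quantities. Since $p_{i_0}$ is the closest point of $\mathcal P_N$ to $p$ and $\sqrt{N}\dist(p,\mathcal P_N)\lesssim 1$ by Theorem~\ref{th:multiplier}, we have $\dist(p,Q_{h_\ell})\lesssim 1/\sqrt{N}$, hence by Lemma~\ref{lem:dist} applied to $Q_c$ and $Q_{h_\ell}$ we get $|d|\lesssim\sqrt{1-h_\ell^2}/\sqrt{N}$ and, since $N(1-h_\ell^2)\gtrsim 1$ by \eqref{eq:zjbound1}--\eqref{eq:zjbound2}, also $\sqrt{1-c^2}\eqsim\sqrt{1-h_\ell^2}$. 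A first-order expansion of $r=(B+2|d|)/C$ then gives $r-1\eqsim|d|/(1-h_\ell^2)$, whence $A(r-1)\lesssim 1$; trivially $|A\theta_{i_0}|\leq\pi$. Using
\[
\bigl|1-r^{-A}e^{iA\theta_{i_0}}\bigr|^2=(1-r^{-A})^2+4r^{-A}\sin^2(A\theta_{i_0}/2)\eqsim\bigl(A(r-1)\bigr)^2+(A\theta_{i_0})^2
\]
together with $|p-p_{i_0}|^2=(B-C)+2C\sin^2(\theta_{i_0}/2)\eqsim d^2/(1-h_\ell^2)+(1-h_\ell^2)\theta_{i_0}^2$, the desired equivalence drops out after multiplying by $N$ and invoking $A^2\eqsim N(1-h_\ell^2)$.

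The main obstacle is precisely this last equivalence: both $|1-r^{-A}e^{iA\theta_{i_0}}|$ and $|p-p_{i_0}|$ vanish simultaneously when $p\to p_{i_0}$, and the two distinct vanishing modes---meridional (through $r-1$, equivalently through $d$) and azimuthal (through $\theta_{i_0}$)---must be matched on the nose along every direction of approach. A bound that isolates only one mode would miss the correct order. The closed-form identity in the second paragraph is indispensable for this reason: it encodes both degeneracies in a single factor $|1-r^{-A}e^{iA\theta_{i_0}}|$. The case $p_{i_0}\in Q_{H_\ell}$ is handled verbatim with $A=(\tilde r_\ell+\tilde r_{\ell+1})/6$ and $h_\ell$ replaced by $H_\ell$, using $A\eqsim\sqrt{N(1-H_\ell^2)}$ from Lemma~\ref{lem:facil2}.
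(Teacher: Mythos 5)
Your Chebyshev-identity route is genuinely different from the paper's approach (the paper splits the parallel into arcs $I_j$, controls the arc through the closest point by a maximum-principle argument for the subharmonic function $\theta\mapsto -\log|u-\gamma_\ell(\theta)|$, and handles the remaining arcs by the midpoint rule), and it is attractively exact: the identity $\sum_i\log|p-p_i|=A f_p(h_\ell)+\log\bigl|1-r^{-A}e^{iA\theta_{i_0}}\bigr|$ is verifiable and correctly derived. However, as written the argument has a gap near the poles.

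The chain ``$\dist(p,Q_{h_\ell})\lesssim 1/\sqrt N\ \Rightarrow\ |d|\lesssim\sqrt{1-h_\ell^2}/\sqrt N\ \Rightarrow\ \sqrt{1-c^2}\eqsim\sqrt{1-h_\ell^2}\ \Rightarrow\ A(r-1)\lesssim 1$'' breaks when $\sqrt{1-c^2}$ is not comparable to $\sqrt{1-h_\ell^2}$. Since $\sqrt{1-h_\ell^2}\eqsim r_\ell/\sqrt N$ can itself be $\eqsim 1/\sqrt N$ (the case $\ell=1$ or $\ell=2M-1$), the bound $|\sqrt{1-c^2}-\sqrt{1-h_\ell^2}|\lesssim 1/\sqrt N$ coming from $\dist(p,Q_{h_\ell})\lesssim 1/\sqrt N$ only gives the one-sided estimate $\sqrt{1-c^2}\lesssim\sqrt{1-h_\ell^2}$; the lower bound $\sqrt{1-c^2}\gtrsim\sqrt{1-h_\ell^2}$ can fail outright. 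The extreme case $p=e_3$ already defeats it: there $\sqrt{1-c^2}=0$, $C=0$, $r=\infty$, so $A(r-1)$ is certainly not $\lesssim 1$, and the expansion $(1-r^{-A})^2\eqsim(A(r-1))^2$ that you invoke is vacuous. The final equivalence $|1-r^{-A}e^{iA\theta_{i_0}}|\eqsim\sqrt N\,|p-p_{i_0}|$ does in fact hold there (both sides are $\eqsim 1$), but this needs its own argument: when $A\log r\gtrsim 1$ one has $|1-r^{-A}e^{iA\theta_{i_0}}|\eqsim 1$, and one must then show $N|p-p_{i_0}|^2\gtrsim 1$, which you have not done. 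You should split into the regime $\sqrt{1-c^2}\leq\tfrac12\sqrt{1-h_\ell^2}$ (where $B-C\geq(\sqrt{1-c^2}-\sqrt{1-h_\ell^2})^2\gtrsim 1-h_\ell^2\gtrsim 1/N$ gives $N|p-p_{i_0}|^2\gtrsim 1$ directly, while density gives $\lesssim 1$) and its complement (where $\sqrt{1-c^2}\eqsim\sqrt{1-h_\ell^2}$ holds and your Taylor-type estimates go through). Related but milder: the step $B-C\eqsim d^2/(1-h_\ell^2)$ used inside $|p-p_{i_0}|^2\eqsim d^2/(1-h_\ell^2)+(1-h_\ell^2)\theta_{i_0}^2$ also relies tacitly on $1-ch_\ell\eqsim 1-h_\ell^2$ and $\sqrt{1-c^2}\eqsim\sqrt{1-h_\ell^2}$, so the same regime split is needed there.

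Two minor points. First, do not cite Theorem~\ref{th:multiplier} for $\sqrt N\,\dist(p,\mathcal P_N)\lesssim 1$: the lemma you are proving is an ingredient in that theorem's proof, so the citation is formally circular; the density bound is the ``relatively dense'' statement established independently in Section~\ref{sec:construction} from Lemmas~\ref{lem:facil2} and~\ref{lem:facil3}, and you should reference it that way. Second, applying Lemma~\ref{lem:dist} requires distinguishing $|h_\ell|\leq|c|$ from $|h_\ell|>|c|$; in the second case the lemma gives $|d|\lesssim\sqrt{1-c^2}/\sqrt N$, which reduces to what you want only after you have the (one-sided, always-valid) bound $\sqrt{1-c^2}\lesssim\sqrt{1-h_\ell^2}$, so say this explicitly.
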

\begin{proof}
	Since the proof of both inequalities is equal, we just prove the first one and we use the notation $Q_\ell=Q_{h_\ell}$, $\gamma_\ell=\gamma_{h_\ell}$ and $c_\ell=4\tilde r_\ell/6+rem_\ell\eqsim \ell.$
	We rename $\mathcal P_N\cap Q_{\ell}=\{ q_1,\dots , q_{c_{\ell}} \}$ and we call 
	$q_1$ the closest point to $p,$ with the former notation, 
	$p_{i_0}=q_1.$   We split the parallel $Q_{\ell}$ in arcs $\gamma_{\ell}(I_j)$ centered 
	on each $q_j$ with angle $\frac{2\pi }{c_{\ell}}$. With this notation, the sum in the lemma --without the $\log\sqrt{N}$ term-- is
	\begin{align*}
		\sum_{\stackrel{p_i \in Q_{\ell}}{i\neq i_0}}  \log | 
		p-p_i|-c_\ell f_p(h_\ell)
		=&
		\sum_{j=2}^{c_{\ell}}\log | p-q_j|-
		\frac{c_{\ell}}{2\pi} \sum_{j=2}^{c_{\ell}} \int_{I_j}\log | p-\gamma_{\ell}(\theta)|d\theta
		\\
		&
		-\frac{c_{\ell}}{2\pi}  \int_{I_1}\log | p-\gamma_{\ell}(\theta)|d\theta.
	\end{align*}
	
	First we estimate this last integral.
	By a rotation we assume that $\gamma_{\ell}(I_1)$ is centered at the point 
	$\tilde{q}_1=(\sqrt{1-h_{\ell}^2},0,h_{\ell})$ and we denote the rotated arc 
	by $I.$ By this rotation the point $p$ goes to some other point 
	$\tilde{p}.$
	Observe that to estimate the integral
	$$-\frac{c_{\ell}}{2\pi} \int_{I}\log | \tilde{p}-\gamma_{\ell}(\theta)|d\theta,$$
	from above, we can replace $\tilde{p}$ by the point $\tilde{q}_1.$
	Indeed,
	$$\Delta_u (\log | u-q |)=2\pi \delta_q-2\pi d\sigma,$$ where 
	$\Delta_u$ is the Laplace-Beltrami operator with respect to the 
	variable $u$ and $\delta_u$ is Dirac's delta. Therefore, out of $q\in \gamma_\ell (I),$ the function $-\log | r-q 
	|$ is subharmonic and satisfies 
	the maximum principle 
	$$\sup_{u\in \S \setminus I}	\int_{I} \log \frac{1}{| u-\gamma_{\ell}(\theta)|}d\theta \le 
	\sup_{ u\in I}	\int_{I} \log \frac{1}{| u-\gamma_{\ell}(\theta) |}d\theta .$$
	Clearly, this last integral is smaller that 
	$$\int_{I} \log \frac{1}{| \tilde{q}_1-\gamma_{\ell}(\theta)|}d\theta.$$
	Using this observation we get for some constant $C>0$ (whose value may vary en each appearance):
	\begin{align*}
		-\frac{c_{\ell}}{2\pi}  \int_{I} \log | \tilde{p}-\gamma_{\ell} (\theta)| 
		d\theta&\leq  -\frac{c_{\ell}}{2\pi} \int_{I} \log | \tilde{q}_1-\gamma_{\ell} (\theta)| 
		d\theta  =-\frac{c_{\ell}}{4\pi}\int_{-\frac{\pi}{c_{\ell}}}^{\frac{\pi}{c_{\ell}}}
		\log (1-\cos \theta) d\theta
		\\
		&
		-\frac{1}{2}\log 
		(1-h_{\ell}^2)+C\le-\frac{c_{\ell}}{\pi}\int_{0}^{\frac{\pi}{c_{\ell}}}\log \theta
		d\theta-\frac{1}{2}\log (1-h_{\ell}^2)+C
		\\
		&
		\le\log \frac{c_{\ell}}{\sqrt{1-h_{\ell}^2}}+C\le\log \sqrt{N}+C,
	\end{align*}
	where we use that $-\log(1-\cos \theta)\leq \log(4/\theta^2)$ in the range $\theta\in[-\pi/2,\pi/2]$ and 
	Lemma~\ref{lem:facil2}. We also have a similar lower bound coming from the fact that $|p-\gamma_{\ell}(\theta)|\lesssim1/\sqrt{N}$ for $\theta\in I_1$:
	\[
	-\frac{c_{\ell}}{2\pi}  \int_{I} \log | \tilde{p}-\gamma_{\ell} (\theta)| 
	d\theta\geq \frac{c_{\ell}}{2\pi}  \int_{I} \log \sqrt{N}d\theta-C=\log\sqrt{N}-C.
	\]
	In other words, we have proved that
	\[
	\left|-\frac{c_{\ell}}{2\pi}  \int_{I} \log | \tilde{p}-\gamma_{\ell} (\theta)| 
	d\theta-\log\sqrt{N}\right|\lesssim1.
	\]
	A similar argument shows that
		\[
		\left|-\frac{c_{\ell}}{2\pi}  \int_{I_j} \log | \tilde{p}-\gamma_{\ell} (\theta)| 
	d\theta-\log|p-q_j|\right|\lesssim1
	\]
	for any $j\neq1$. This allows us to remove any constant number of terms of the sum in the lemma for proving the bound. We thus have to bound
	\begin{equation}\label{eq:ultima}
		\left|\sum_{j\in J}\log | p-q_j|-
		\frac{c_{\ell}}{2\pi} \sum_{j\in J} \int_{I_j}\log | p-\gamma_{\ell}(\theta)|d\theta\right|,
		\end{equation}
		where $J$ is the set of indices $j\in\{1,\ldots,c_\ell\}$ such that $\dist(p,\gamma_{\ell}(I_j))\geq 1/\sqrt{N}$. Now, for such $j$ we can apply the classical estimate for the midpoint rule in Lemma \ref{lem:midpointbasico} getting
		\[
		\left|
		\log | p-q_j|-\frac{c_{\ell}}{2\pi} \int_{I_j}\log | p-\gamma_{\ell}(\theta)|d\theta\right|\lesssim\frac{1}{c_\ell^2}\sup_{\theta\in I_j}\left|\frac{d^2}{d\theta^2}(\log | p-\gamma_{\ell}(\theta)|)\right|.
		\]
		This second derivative has been computed in \eqref{eq:2der} and can be bounded using \eqref{eq:2der2} and \eqref{eq:normas} thus proving that
		\[
		\left|
		\log | p-q_j|-\frac{c_{\ell}}{2\pi} \int_{I_j}\log | p-\gamma_{\ell}(\theta)|d\theta\right|\lesssim\frac{1}{c_\ell^2}\sup_{\theta\in I_j}\frac{1-h_\ell^2+\sqrt{1-h_\ell^2}\sqrt{1-c^2}}{|p-\gamma_\ell(\theta)|^2}.
		\]
		But $\sqrt{1-c^2}\lesssim\sqrt{1-h_\ell^2}$ and since $j\in J$ we have $|p-\gamma_\ell (\theta)|\eqsim|p-q_j|$ for all $\theta\in I_j$, which yields
				\[
		\left|
		\log | p-q_j|-\frac{c_{\ell}}{2\pi} \int_{I_j}\log | p-\gamma_{\ell}(\theta)|d\theta\right|\lesssim\frac{1}{c_\ell^2}\frac{1-h_\ell^2}{|p-q_j|^2}.
		\]
			Recall that $\dist(p,\mathcal P_N)=| p-p_1|$ and the points $p_j$ in the parallel 
		$Q_{\ell}$ are separated by a constant times $N^{-1/2}$ and hence 
		$$|\tilde{p}-\tilde{q}_j|=|p-q_j|\gtrsim \frac{j}{\sqrt{N}},\quad 1\leq j\leq \frac{c_\ell}{2},$$
 with a similar inequality for $c_\ell/2\leq j\leq c_{\ell}$. We thus conclude that
 	\[
 \left|
 \log | p-q_j|-\frac{c_{\ell}}{2\pi} \int_{I_j}\log | p-\gamma_{\ell}(\theta)|d\theta\right|\lesssim\frac{1}{c_\ell^2}\frac{N(1-h_\ell^2)}{j^2}\eqsim\frac{1}{j^2},
 \]
 the last from Lemma \ref{lem:facil2}. We conclude that
 \[
 \eqref{eq:ultima}\lesssim\sum_{j\in J}\frac{1}{j^2}\lesssim1,
 \]
 and thus the result.
\end{proof}

\subsubsection{Parallels that are moderately close to $p$}
If $p\in B_\ell$, we will bound the terms corresponding to the parallels in $B_{\ell-1},B_{\ell}$ and $B_{\ell+1}$ (with the exception of the closest parallel to $p$, that we have already dealt with) using the following lemma. 
\begin{lemma}\label{lem:close}
	Let $p\in\S$. Then, for any $j=1,\ldots,2M-1$ such that $\dist(p,Q_{h_j})\gtrsim 1/\sqrt{N}$ then
	\[
	\left|\sum_{p_i \in Q_{h_j}} \log|p-p_i|-\left( \frac{4 \tilde{r}_j}{6}+rem_j \right)f_p(h_j)\right|\lesssim 1.
	\]
	Similarly, for any $j=1,\ldots,2M-2$ such that $\dist(p,Q_{H_j})\gtrsim 1/\sqrt{N}$ we have
	\[
	\left|\sum_{p_i \in Q_{H_j}} \log|p-p_i|-  \left( \frac{\tilde{r}_j+\tilde{r}_{j+1}}{6} \right)f_p(H_j)\right|\lesssim 1.
	\]
\end{lemma}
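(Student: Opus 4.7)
The plan is to apply Lemma~\ref{lem:pointstoparallels} to $Q_{h_j}$. Let $D:=\dist(p,Q_{h_j})$ and let $A$ denote the number of points of $\mathcal P_N$ on $Q_{h_j}$, so $A\eqsim r_j$. By Lemma~\ref{lem:facil2}, $\sqrt{1-h_j^2}/A\eqsim 1/\sqrt{N}$, so the hypothesis $D\gtrsim 1/\sqrt{N}$ is exactly what Lemma~\ref{lem:pointstoparallels} requires. Applying it yields the error bound
\[
\frac{1}{A^2}\left(\sqrt{1-h_j^2}\,I_1+R(c,h_j)\,I_3\right),\qquad I_k:=\int_0^{2\pi}\frac{d\theta}{|p-\gamma_{h_j}(\theta)|^k},
\]
and it suffices to show each summand is $\lesssim 1$.

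For the $I_1$ term I would simply use $I_1\leq 2\pi/D$. Combining $A^2\eqsim N(1-h_j^2)$ with $D\gtrsim 1/\sqrt{N}$ and $\sqrt{1-h_j^2}\gtrsim 1/\sqrt{N}$ (the latter from $1-h_j^2\gtrsim C_1 j^2/N\geq C_1/N$, recorded in Section~\ref{sec:construction}) gives $\sqrt{1-h_j^2}\,I_1/A^2\lesssim 1/(ND\sqrt{1-h_j^2})\lesssim 1$.

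The $I_3$ term is the main obstacle, since the trivial $I_3\leq 2\pi/D^3$ combined with $R(c,h_j)/A^2\eqsim (\sqrt{1-c^2}+\sqrt{1-h_j^2})/N$ is not enough on its own. I would invoke the identity
\[
D^2=(c-h_j)^2+\bigl(\sqrt{1-c^2}-\sqrt{1-h_j^2}\bigr)^2,
\]
which follows by expanding $D^2=2-2ch_j-2\sqrt{(1-c^2)(1-h_j^2)}$, and split into cases. If $\sqrt{1-c^2}$ and $\sqrt{1-h_j^2}$ differ by a fixed factor, the identity forces $D\gtrsim\sqrt{1-c^2}+\sqrt{1-h_j^2}$, and then $I_3\leq 2\pi/D^3$ already gives $R(c,h_j)I_3/A^2\lesssim 1/(N(\sqrt{1-c^2}+\sqrt{1-h_j^2})^2)\lesssim 1$. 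In the complementary regime $\sqrt{1-c^2}\eqsim\sqrt{1-h_j^2}$, set $K:=\sqrt{(1-c^2)(1-h_j^2)}\eqsim 1-h_j^2$, so $R(c,h_j)\eqsim(1-h_j^2)^{3/2}$; here I use the sharper estimate $I_3\lesssim 1/(\sqrt{K}\,D^2)$, obtained by direct integration from $|p-\gamma_{h_j}(\theta)|^2\eqsim D^2+K\theta^2$, valid when $\sqrt{K}\gtrsim D$, which collapses the bound to $1/(ND^2)\lesssim 1$. The residual sub-case $\sqrt{K}<D$ is handled by $I_3\leq 2\pi/D^3$ together with $D>\sqrt{1-h_j^2}$, giving $R(c,h_j)I_3/A^2\lesssim 1/(N(1-h_j^2))\eqsim 1/r_j^2\lesssim 1$.

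The bound for $Q_{H_j}$ follows by the same argument, using that the number of points on $Q_{H_j}$ is still $\eqsim r_j$ and the $H_j$-analogue of Lemma~\ref{lem:facil2}.
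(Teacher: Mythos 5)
Your proof is correct, and it solves the key difficulty — the $|p-\gamma|^{-3}$ integral — by a route that is genuinely different from the paper's. The paper splits the error into three pieces $I_1,I_2,I_3$ according to the two terms of $R(c,h)$, uses the trivial inequality $|p-\gamma|^{-3}\leq D^{-1}|p-\gamma|^{-2}$, and then invokes the exact closed form $\int_0^{2\pi}|p-\gamma_h(\theta)|^{-2}\,d\theta/2\pi=1/(2|h-c|)$ from Lemma~\ref{lem:lineintegralcuad}, after which Lemma~\ref{lem:dist} converts $|h_j-c|$ into $D\sqrt{1-\cdot^2}$. You instead keep $R(c,h_j)$ intact, expand $|p-\gamma_{h_j}(\theta)|^2=D^2+2K(1-\cos\theta)\eqsim D^2+K\theta^2$ with $K=\sqrt{(1-c^2)(1-h_j^2)}$, and bound the integral directly, using the exact identity $D^2=(c-h_j)^2+(\sqrt{1-c^2}-\sqrt{1-h_j^2})^2$ for the case split. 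Both routes are elementary; the paper's is shorter because it recycles lemmas already established (\ref{lem:lineintegralcuad}, \ref{lem:dist}), while yours is more self-contained and, by working with the exact distance identity, quietly sidesteps the restriction $|h-c|\leq 1/4$ in Lemma~\ref{lem:dist}.

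One small redundancy: your estimate $I_3\lesssim (\sqrt{K}D^2)^{-1}$ is in fact valid unconditionally (the change of variables $u=\sqrt{K}\theta/D$ gives $I_3=\frac{1}{\sqrt{K}D^2}\int_{-\pi\sqrt{K}/D}^{\pi\sqrt{K}/D}(1+u^2)^{-3/2}\,du\leq\frac{2}{\sqrt{K}D^2}$ regardless of the ratio $\sqrt{K}/D$), and within the regime $\sqrt{K}\eqsim\sqrt{1-h_j^2}$ it already yields $R(c,h_j)I_3/A^2\lesssim 1/(ND^2)\lesssim 1$; so the residual sub-case $\sqrt{K}<D$ need not be treated separately. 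This does not affect correctness.
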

\begin{proof}
	We prove the first inequality since both follow from the same argument. From Lemma \ref{lem:pointstoparallels} and denoting $p=(a,b,c)$ we just need to show that $I_1+I_2+I_3\lesssim1$ where
	\begin{align*}
		I_1=&\frac{\sqrt{1-h_j^2}}{j^2}
		\int_{0}^{2\pi}\frac{1}{ |p-\gamma_{h_j}(\theta) |}\, d\theta,\\
		I_2=&\frac{\sqrt{1-c^2}(1-h_j^2)}{j^2}\int_{0}^{2\pi}\frac{1}{ |p-\gamma_{h_j}(\theta) |^3}\,d\theta,\\
		I_3=&\frac{(1-h_j^2)^{3/2}}{j^2}\int_{0}^{2\pi}\frac{1}{ |p-\gamma_{h_j}(\theta) |^3}\,d\theta.
	\end{align*}
Now, from Lemmas \ref{lem:facil2}, \ref{lem:lineintegralcuad} and \ref{lem:dist} and the hypotheses of the lemma we have
	\begin{align*}
I_1\lesssim&\frac{1}{j\sqrt{N}}\int_{0}^{2\pi}\frac{1}{ |p-\gamma_{h_j}(\theta) |}\, d\theta\lesssim\frac1j\lesssim1,\\
I_2\lesssim&\frac{\sqrt{1-c^2}}{\sqrt{N}}\int_{0}^{2\pi}\frac{1}{ |p-\gamma_{h_j}(\theta) |^2}\, d\theta\lesssim\frac{\sqrt{1-c^2}}{\sqrt{N}|h_j-c|}\lesssim1,\\
I_3\lesssim&\frac{\sqrt{1-h_j^2}}{\sqrt{N}}\int_{0}^{2\pi}\frac{1}{ |p-\gamma_{h_j}(\theta) |^2}\, d\theta\lesssim\frac{\sqrt{1-h_j^2}}{\sqrt{N}|h_j-c|}\lesssim1.
	\end{align*}
\end{proof}


\subsubsection{Parallels that are far from $p$}
Finally, assuming that $p\in B_\ell$, we bound the terms corresponding to the parallels $Q_{h_j}$ and $Q_{H_j}$ that do not touch $B_{\ell-1},B_{\ell}$ or $B_{\ell+1}$. We can therefore assume that we are under the hypotheses of Lemma \ref{lem:pointstoparallels}, that is, that for some constant $C>0$ we have
\[
\dist(p,B_j)\geq\frac{C}{\sqrt{N}}\geq \frac{C\sqrt{1-h_j^2}}{r_j}.
\]
We now prove the following result.
\begin{lemma}\label{lem:lejosmuchos}
	If $p\in B_\ell$ then
	\[
	\sum_{\stackrel{j=2}{j\neq \ell-1,\ell,\ell+1}}^{2M-2}\left[\sum_{p_i \in Q_{h_j}} \log|p-p_i|-\left( \frac{4 \tilde{r}_j}{6}+rem_j \right)f_p(h_j) \right]\lesssim1.
	\]
	Similarly,
	\[
	\sum_{\stackrel{j=2}{j\neq \ell-1,\ell,\ell+1}}^{2M-2}\left[\sum_{p_i \in Q_{H_j}} \log|p-p_i|-\left( \frac{\tilde{r}_j+\tilde{r}_{j+1}}{6} \right)f_p(H_j) \right]\lesssim1.
	\]
\end{lemma}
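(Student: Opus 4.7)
The plan is to apply Lemma~\ref{lem:pointstoparallels} to each ``far'' parallel $Q_{h_j}$ (respectively $Q_{H_j}$) separately, thereby bounding each summand by an explicit error $E_j$ written as a line integral over the parallel, and then to establish $\sum_j |E_j|\lesssim 1$ by showing $|E_j|$ decays fast enough in $|j-\ell|$. Since $p\in B_\ell$ and $|j-\ell|\ge 2$, the bands $B_\ell$ and $B_j$ are separated by at least one intervening band, so Lemmas~\ref{lem:facil3} and~\ref{lem:dist} yield $\dist(p,Q_{h_j})\gtrsim 1/\sqrt{N}\gtrsim \sqrt{1-h_j^2}/c_j$, where $c_j=4\tilde r_j/6+rem_j\eqsim j$ is the number of points on $Q_{h_j}$; this is exactly the hypothesis required by Lemma~\ref{lem:pointstoparallels}. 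The argument for $Q_{H_j}$ is strictly analogous, since $\sqrt{1-H_j^2}\eqsim j/\sqrt{N}$ as well.

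Next I would invoke the second half of Lemma~\ref{lem:pointstoparallels} with $B=B_j$ and $\epsilon=r_j/N$ to convert the line integrals on $Q_{h_j}$ into surface integrals over $B_j$, obtaining
\[
|E_j|\lesssim \frac{\sqrt{1-h_j^2}}{c_j^{2}\,\sigma(B_j)}\int_{B_j}\frac{d\sigma(q)}{|p-q|}+\frac{R(c,h_j)}{c_j^{2}\,\sigma(B_j)}\int_{B_j}\frac{d\sigma(q)}{|p-q|^{3}}.
\]
Using Lemma~\ref{lem:facil2} to substitute $c_j\eqsim j$, $\sqrt{1-h_j^{2}}\eqsim j/\sqrt{N}$ and $\sigma(B_j)\eqsim j/N$, the prefactors become $\sqrt{N}/j^{2}$ and $N\,R(c,h_j)/j^{3}$, respectively. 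Writing $k=|j-\ell|\ge 2$, the telescoping $H_{j-1}-H_j=2r_j/N$ together with Lemma~\ref{lem:dist} gives the uniform lower bound $\dist(p,B_j)\eqsim k/\sqrt{N}$; combined with $\sigma(B_j)\eqsim j/N$ this bounds the first surface integral by $j/(k\sqrt{N})$, so the first piece contributes $\lesssim 1/(jk)$. The partial-fraction identity then gives $\sum_{j\neq\ell-1,\ell,\ell+1}1/(j|j-\ell|)\lesssim (\log\ell)/\ell\lesssim 1$ after splitting into $j>\ell$ and $j<\ell$.

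The main obstacle is the cubic-integral term: the naive bound $|p-q|^{-3}\le \dist(p,B_j)^{-3}$ combined with $R(c,h_j)\eqsim (\ell j^{2}+j^{3})/N^{3/2}$ produces a tail that grows linearly in $\ell$. To circumvent this I would integrate $|p-q|^{-3}$ accurately across the band via Lemma~\ref{lem:lineintegralcuad} and the elementary bound $\int_0^{2\pi} d\theta/|p-\gamma_t|^{3}\le \dist(p,Q_t)^{-1}\int_0^{2\pi} d\theta/|p-\gamma_t|^{2}$, which after slicing yields
\[
\int_{B_j}\frac{d\sigma(q)}{|p-q|^{3}}\lesssim \frac{\min(j,\ell)}{\sqrt{N}}\int_{H_j}^{H_{j-1}}\frac{dt}{(t-c)^{2}}\eqsim \frac{\sqrt{N}\,\min(j,\ell)}{j\,k^{2}}.
\]
After substitution this second contribution to $|E_j|$ is $\lesssim (\ell+j)\,\min(j,\ell)/(j^{3}k^{2})$, and separate summation over $j>\ell$ and $j<\ell$ again gives $O((\log\ell)/\ell)=O(1)$ uniformly in $p$. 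The $Q_{H_j}$ sum is handled identically by replacing $h_j$ with $H_j$ throughout and using the analogous estimates from Lemmas~\ref{lem:facil2}--\ref{lem:facil3}. Adding the two contributions completes the proof.
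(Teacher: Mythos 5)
Your route is genuinely different from the paper's. The paper controls the cubic terms by reassembling all the band integrals into a single surface integral over $\S\setminus B(p,C/\sqrt{N})$, evaluating it explicitly, and using the factorization $\sqrt{1-h^2}=|q_h-e_3||q_h+e_3|/2$ together with H\"older's inequality to handle the cross term $\sqrt{1-c^2}$. You instead estimate each band's contribution individually and show geometric decay in $k=|j-\ell|$, then sum; your slicing idea via Lemma~\ref{lem:lineintegralcuad} is a nice replacement for the H\"older step. The decomposition of the $\sum 1/(j|j-\ell|)$ sum is also sound.

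There is, however, an error in the key distance estimate. By Lemma~\ref{lem:dist}, for $|h|\le|c|$ one has $\dist(Q_c,Q_h)\eqsim|h-c|/\sqrt{1-h^2}$ where $\sqrt{1-h^2}$ is the \emph{larger} of $\sqrt{1-h^2}$ and $\sqrt{1-c^2}$. Hence for $t$ in $B_j$,
\[
\frac{1}{\dist(p,Q_t)}\eqsim\frac{\max\bigl(\sqrt{1-t^2},\sqrt{1-c^2}\bigr)}{|t-c|}\eqsim\frac{\max(j,\ell)}{\sqrt{N}\,|t-c|},
\]
so the prefactor in your slicing bound should be $\max(j,\ell)$, not $\min(j,\ell)$; as written the claim is false by a factor as large as $\max(j,\ell)/\min(j,\ell)\lesssim\sqrt{N}$. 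Redoing the computation, $\int_{H_j}^{H_{j-1}}(t-c)^{-2}\,dt\lesssim jN/\bigl(k^2(j+\ell)^2\bigr)$, so $\int_{B_j}|p-q|^{-3}\,d\sigma\lesssim j\max(j,\ell)\sqrt{N}/\bigl(k^2(j+\ell)^2\bigr)$, and multiplying by the prefactor $(\ell+j)/(j\sqrt{N})$ gives a per-band contribution $\lesssim\max(j,\ell)/\bigl(k^2(j+\ell)\bigr)\le1/k^2$, which is summable. Thus your conclusion survives, but the intermediate bound with $\min$ and the final $(\ell+j)\min(j,\ell)/(j^3k^2)$ are not correct and need to be replaced. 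With these algebraic fixes the argument becomes a valid alternative proof.
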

\begin{proof}
	We just prove the first assertion, since the second one is proved the same way.
Lemma \ref{lem:pointstoparallels} yields
\begin{multline*}
\sum_{\stackrel{j=2}{j\neq \ell-1,\ell,\ell+1}}^{2M-2}\left[\sum_{p_i \in Q_{h_j}} \log|p-p_i|-\left( \frac{4 \tilde{r}_j}{6}+rem_j \right)f_p(h_j) \right]\lesssim\\\sum_{\stackrel{j=2}{j\neq \ell-1,\ell,\ell+1}}^{2M-1}\frac{1}{j} \left( \frac{1}{\sqrt{1-h_j^2}}\int_{B_j} \frac{1}{|p-q|}d\sigma(q)+\frac{R(c,h_j)}{1-h_j^2}\int_{B_j} \frac{1}{|p-q|^3}d\sigma(q) \right).  
\end{multline*}

We split this last sum in three parts
\begin{align*}
T_1=&\sum_{j\neq \ell-1,\ell,\ell+1}
\frac{1}{j \sqrt{1-h_j^2}} \int_{B_j}\frac{1}{|p-q|}\,d\sigma(q),\\
T_2=&\sum_{j\neq \ell-1,\ell,\ell+1}
\frac{\sqrt{1-c^2}}{j}\int_{B_j}\frac{1}{|p-q|^3}\,d\sigma(q),\\
T_3=&\sum_{j\neq \ell-1,\ell,\ell+1}
\frac{\sqrt{1-h_j^2}}{j}\int_{B_j}\frac{1}{|p-q|^3}\,d\sigma(q).
\end{align*}
The easiest one is $T_3$, since from 
Lemma~\ref{lem:facil2} we have:
\begin{multline}
T_3\lesssim \frac{1}{\sqrt{N}}\sum_{j\neq \ell-1,\ell,\ell+1} 
\int_{B_j}\frac{1}{|p-q|^3}d\sigma(q)
\lesssim 
\frac{1}{\sqrt{N}}\int_{\S\setminus 
B(p,C/\sqrt{N})}\frac{1}{|p-q|^3}d\sigma(q)=\\
\frac{1}{\sqrt{N}}\int_{-1}^{1-C^2/(2N)}\frac{1}{(1-t)^{3/2}}\,dt\lesssim 1,\label{eq:pataton}
\end{multline}
where, recall, $B(p,C/\sqrt{N})$ is a spherical cap around $p$ of radius $C/\sqrt{N}$.

Now, for $T_2$, for those $j$ such that $\sqrt{1-c^2}\leq \sqrt{1-h_j^2}$ we 
apply the previous argument. In other case, again 
from 
Lemma~\ref{lem:facil2}, we have
\begin{align*}
T_2 & \lesssim 
\frac{1}{\sqrt{N}}\sum_{j\neq \ell-1,\ell,\ell+1}
\frac{\sqrt{1-c^2}}{\sqrt{1-h_j^2}}\int_{B_j}\frac{1}{|p-q |^3}\,d\sigma(q)
\\
&
=
\frac{1}{\sqrt{N}}\sum_{j\neq \ell-1,\ell,\ell+1}
\frac{|p-e_3| |p+e_3|}{|q_{h_j}-e_3| |q_{h_j}+e_3|}\int_{B_j}\frac{1}{|p-q |^3}\,d\sigma(q)
\end{align*}
where we are using that for any point $q_h \in Q_h$ we have 
$$\sqrt{1-h^2}= |q_h -e_3 | |q_h +e_3 |/2.$$  
For any $q\in B_j$ we have that
$$|q_{h_j}\pm e_3|\gtrsim |q \pm e_3|.$$
And we thus conclude 
that
\[
T_2\lesssim \frac{1}{\sqrt{N}}\sum_{j\neq \ell-1,\ell,\ell+1}
\int_{B_j}  \frac{|p-e_3| |p+e_3|}{|p-q |^3 |q -e_3| |q +e_3|} \,d\sigma(q).
\]
By a symmetry argument and using $|p-e_3|\leq  |p-q |+ |q-e_3 |$ it suffices 
to bound
\[
\frac{1}{\sqrt{N}}\int_{\{q\in\S:|p-q|\ge C/\sqrt{N}\}}\left( \frac{1}{ |p-q|^2 |q-e_3 |}+\frac{1}{|p-q |^3} \right)\,d\sigma(q),
\]
for a certain constant $C>0.$
Following the same argument as the one used for $T_3$ it is enough to consider
\begin{multline*}
\frac{1}{\sqrt{N}}\int_{\{q\in\S:|p-q|\ge C/\sqrt{N}\}} \frac{1}{ |p-q|^2 |q-e_3 |} d\sigma(q) \leq\\
\frac{1}{\sqrt{N}}\int_{\{q\in\S:|p-q|\geq C/\sqrt{N},|q-e_3|\ge 1/\sqrt{N} \}} \frac{1}{ |p-q|^2 |q-e_3 |} d\sigma(q)
\\
+
\frac{1}{\sqrt{N}}\int_{\{q\in\S:|p-q|\geq C/\sqrt{N},|q-e_3|\le 1/\sqrt{N} \}} \frac{1}{ |p-q|^2 |q-e_3 |} d\sigma(q).
\end{multline*}
The first of these two integrals is from H\"older's inequality at most
\[
\left(\int_{\{q\in\S:|p-q|\ge  C/\sqrt{N} \}}\frac{1}{ |p-q |^3}\,d\sigma(q)\right)^{2/3}
\left(\int_{
\{q\in\S: |q-e_3|\ge 1/\sqrt{N} \} }\frac{1}{ |q-e_3 |^3}\,
d\sigma(q)\right)^{1/3},
\]
which is bounded above again by a constant times $\sqrt{N}$ as already seen in \ref{eq:pataton}. We bound the second integral as
\[
N \int_{\{ q\in\S:|q-e_3|\le 1/{\sqrt{N}}\} }\frac{1}{ |q-e_3 |}\,
d\sigma(q)
\lesssim  N\int_{1-\frac{2}{N}}^1\frac{1}{\sqrt{1-t}}\,dt\lesssim \sqrt{N}.
\]

It remains to bound $T_1$. Again from Lemma~\ref{lem:facil2} we have
\begin{align*}
T_1 \lesssim \frac{1}{\sqrt{N}}
\sum_{j\neq \ell-1,\ell,\ell+1}
\frac{1}{1-h_j^2} & \int_{ B_j}\frac{1}{ |p-q |}\,d\sigma(q)
\lesssim \sqrt{N}\int_{B_1\cup B_{2M-1}} \frac{1}{ |p-q |}\,d\sigma(q)
\\
 +
 &
 \frac{1}{\sqrt{N}}\left[ \sum_{j=2}^{\ell-2}+\sum_{j=\ell+2}^{2M-2} \right]
\frac{1}{1-h_j^2}\int_{ B_j}\frac{1}{ |p-q |}\,d\sigma(q),
\end{align*}
where we have used that $1-h_j^2\gtrsim 1/N$. The first integral is easily bounded by
\[
\int_{q\in B_1 \cup B_{2M-1}}\frac{1}{|p-q|}\,d\sigma(q)\leq
\int_{ B_1}\frac{1}{ |e_3-q |}\,d\sigma(q)+\int_{ B_{2M-1}}\frac{1}{ |e_3+q |}\,d\sigma(q)\lesssim \frac{1}{\sqrt{N}}.
\]
Finally, from the same arguments as above we have to bound
\[
\frac{1}{\sqrt{N}} \int_{\{q\in \S: |p-q |\ge C/\sqrt{N}, |q\pm e_3 |\geq C'/\sqrt{N}\}}  \frac{1}{ |p-q | |q-e_3 |^2 |q+e_3 |^2}\,d\sigma(q),
\]
where $C,C'$ are positive constants and it is enough to check that 
\[
\int_{\{q\in \S: |p-q |\ge C/\sqrt{N}, |q-e_3 |\geq 
C'/\sqrt{N} \}}\frac{1}{|p-q||q-e_3|^2}\,d\sigma(q),
\]
\[
\int_{\{q\in \S:|p-q|\ge C/\sqrt{N},|q+e_3|\geq 
C'/\sqrt{N}\}}\frac{1}{|p-q||q+e_3|^2}\,d\sigma(q),
\]
are $O(\sqrt{N}).$ This again follows from H\"older's inequality.
\end{proof}


\subsection{Proof of Theorem \ref{th:multiplier}}
We are now ready to finish the proof. By Proposition \ref{bandparallel}
\begin{align*}
\sum_{i=1}^N & \log | p-p_i|=
\sum_{\stackrel{i=1}{i\neq i_0}}^N \log | p-p_i| +\log \dist(p,\mathcal P_N) = -\kappa 
N+\log \left(\sqrt{N} \dist(p,\mathcal P_N)\right)
\\
&
+\sum_{p_i \not\in Q_{\ell}}\log | p-p_i |-S_N(\ell) 
+
\sum_{\stackrel{p_i \in Q_{\ell}}{i\neq i_0}}\log | 
p-p_i|-c_\ell f_p(h_{\ell})-\frac12\log N+O(1),
\end{align*}
where $S_N(\ell)$ is the sum $S_N$ without the part corresponding to the parallel $Q_{\ell}$ and $c_\ell\eqsim \ell$ is the number of points in parallel $Q_\ell$. From Lemmas \ref{lem:veryclose}, \ref{lem:close} and \ref{lem:lejosmuchos} we conclude that
\[
\left|\sum_{i=1}^N \log  |p-p_i |+ \kappa N - \log \left(\sqrt{N} \dist(p,\mathcal P_N)\right)\right|\lesssim1,
\]
as wanted.


\appendix
\section{The error of the mid-point rule for numerical integration}
Recall the following classical estimates for the midpoint and Simpson integration rules.
\begin{lemma}\label{lem:midpointbasico}
	Let $f:[a,b]\to\R$ be a $C^2$ function. Then,
	\[
	\left|\int_a^bf(x)\,dx-(b-a)f\left(\frac{a+b}{2}\right)\right|\leq\frac{(b-a)^3 \|f^{(2)}\|_\infty}{24}.
	\]
	Moreover, if $f$ is $C^4$ then,
	\[
	\left|\int_a^bf(x)\,dx-\frac{b-a}{6}\left(f(a)+4f\left(\frac{a+b}{2}\right)+f(b)\right)\right|\leq\frac{(b-a)^5\|f^{(4)}\|_\infty}{2880}.
	\]
\end{lemma}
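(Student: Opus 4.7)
The plan is to present the standard Taylor-based derivations of these two classical quadrature error bounds.

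For the midpoint estimate, I would set $m=(a+b)/2$ and use the second-order Taylor expansion $f(x)=f(m)+f'(m)(x-m)+\tfrac12 f^{(2)}(\xi_x)(x-m)^2$ with $\xi_x$ between $x$ and $m$. Integrating over $[a,b]$, the constant term contributes $(b-a)f(m)$, the linear term vanishes by symmetry since $\int_a^b(x-m)\,dx=0$, and the quadratic remainder is bounded in absolute value by $\tfrac12\|f^{(2)}\|_\infty\int_a^b(x-m)^2\,dx=(b-a)^3\|f^{(2)}\|_\infty/24$, which is exactly the first inequality.

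For Simpson's rule, the key observation is that the error functional
\[
L(f)=\int_a^b f\,dx-\tfrac{b-a}{6}\bigl(f(a)+4f(m)+f(b)\bigr)
\]
annihilates every polynomial of degree at most $3$; this is verified directly on the basis $1,\,x-m,\,(x-m)^2,\,(x-m)^3$. Let $p$ be the Hermite interpolant of $f$ of degree $3$ at the nodes $a$, $m$ (double), $b$; then $L(f)=L(f-p)$. The Hermite remainder formula gives
\[
(f-p)(x)=\tfrac{f^{(4)}(\xi_x)}{4!}(x-a)(x-m)^2(x-b)
\]
for some $\xi_x\in(a,b)$. Since the weight $(x-a)(x-m)^2(b-x)$ is nonnegative on $[a,b]$, we may bring the absolute value inside the integral and obtain
\[
|L(f)|\le\tfrac{\|f^{(4)}\|_\infty}{24}\int_a^b (x-a)(x-m)^2(b-x)\,dx.
\]
The substitution $x=m+t$ with $h=(b-a)/2$ reduces the remaining integral to $\int_{-h}^{h}(h^2t^2-t^4)\,dt=\tfrac{4h^5}{15}=\tfrac{(b-a)^5}{120}$, and multiplying by $\tfrac{1}{24}$ produces the advertised constant $\tfrac{1}{2880}$.

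There is no genuine obstacle here; the only step requiring care is recalling the Hermite remainder formula and checking that its kernel has a definite sign on $[a,b]$, so that the pointwise fourth-derivative bound can be integrated without losing a factor. As an alternative, one may invoke the Peano kernel representation of the cubic-exact functional $L$, which yields the same constant without appealing to the Hermite formula explicitly.
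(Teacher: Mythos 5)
Your proposal is correct. Note, however, that the paper does not actually supply a proof of Lemma~\ref{lem:midpointbasico}: it is introduced with ``Recall the following classical estimates'' and simply cited as a standard fact (the paper only proves the companion Lemma~\ref{lem:midpoint}, by a direct Taylor expansion). So there is no paper proof to compare against; what you have written is new content. Your arguments are sound and give the sharp constants: the Taylor-plus-symmetry argument for the midpoint rule, and, for Simpson, the observation that the error functional $L$ annihilates cubics combined with the Hermite remainder at $a,m,m,b$ and the sign-definiteness of the kernel $(x-a)(x-m)^2(b-x)$, which lets the $\sup$ of $|f^{(4)}|$ come out of the integral without loss. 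The only cosmetic caveat is in the midpoint step: writing the remainder with a pointwise $\xi_x$ raises a (harmless) measurability question; it is cleaner to use the integral form of the Taylor remainder or simply the pointwise bound $|f(x)-f(m)-f'(m)(x-m)|\le \tfrac12\|f^{(2)}\|_\infty (x-m)^2$ before integrating. This does not affect the conclusion. Your alternative remark about the Peano kernel is also apt and would be in the same spirit as the paper's proof of Lemma~\ref{lem:midpoint}.
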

We also need the following more sophisticated version of the midpoint rule.
\begin{lemma}\label{lem:midpoint}
Let $f:[a,b]\to\R$ be a $C^3$ function. Then,
\[
\left|\int_a^bf(x)\,dx-(b-a)f\left(\frac{a+b}{2}\right)-\frac{(b-a)^2}{24}
\int_a^bf''(x)\,dx\right|\leq\frac{(b-a)^4\|f^{(3)}\|_\infty}{64}.
\]
\end{lemma}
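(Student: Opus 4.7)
The plan is to exploit the fact that the quadrature rule with the $f''$-correction is exact on quadratic polynomials; this lets me replace $f$ by its remainder after subtracting its degree-two Taylor polynomial at the midpoint, so the error can be controlled in terms of $\|f'''\|_\infty$ rather than $\|f''\|_\infty$.

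First I would verify that the linear functional
\[
E(f) := \int_a^b f(x)\,dx - (b-a)\,f(m) - \frac{(b-a)^2}{24}\int_a^b f''(x)\,dx,\qquad m:=\tfrac{a+b}{2},
\]
vanishes on $\mathcal{P}_2$. For $f\equiv 1$ and $f(x)=x$ this is immediate (the latter by symmetry about $m$). For $f(x)=(x-m)^2$ one computes $\int_a^b (x-m)^2\,dx = (b-a)^3/12$, while $f(m)=0$ and the correction contributes $\tfrac{(b-a)^2}{24}\cdot 2(b-a)=(b-a)^3/12$; these cancel. In other words, the $\tfrac{(b-a)^2}{24}\int f''$ term is designed precisely to absorb the leading quadratic that the plain midpoint rule misses.

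Second, let $P$ be the degree-two Taylor polynomial of $f$ at $m$, and set $g:=f-P$. Since $E$ is linear and $E(P)=0$, we have $E(f)=E(g)$; and since $g(m)=0$ the midpoint term in $E(g)$ vanishes automatically. The integral form of the Taylor remainder gives the pointwise bounds
\[
|g(x)| \le \frac{|x-m|^3}{6}\,\|f'''\|_\infty,\qquad |g''(x)|=|f''(x)-f''(m)|\le |x-m|\,\|f'''\|_\infty.
\]
Integrating these over $[a,b]$ and using $\int_a^b |x-m|^3\,dx=(b-a)^4/32$ and $\int_a^b |x-m|\,dx=(b-a)^2/4$ yields
\[
\Bigl|\int_a^b g(x)\,dx\Bigr| \le \frac{(b-a)^4}{192}\|f'''\|_\infty,\qquad \frac{(b-a)^2}{24}\Bigl|\int_a^b g''(x)\,dx\Bigr|\le \frac{(b-a)^4}{96}\|f'''\|_\infty.
\]
The triangle inequality then gives $|E(f)|\le\bigl(\tfrac{1}{192}+\tfrac{1}{96}\bigr)(b-a)^4\|f'''\|_\infty = \tfrac{(b-a)^4}{64}\|f'''\|_\infty$, as claimed.

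There is no real obstacle here; the one point worth highlighting is the recognition that the correction $\frac{(b-a)^2}{24}\int f''$ is an Euler--Maclaurin-style endpoint correction that raises the order of exactness of the rule to $\mathcal{P}_2$, and it is exactly this extra order that allows the error to be controlled by $\|f'''\|_\infty$ rather than $\|f''\|_\infty$. Obtaining the sharp constant $1/64$ stated in the lemma requires bounding the two remainder contributions in $E(g)$ separately, rather than through a single crude Peano-kernel estimate which would give only $1/192$ times a larger factor.
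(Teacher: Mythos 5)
Your proof is correct and takes essentially the same route as the paper's: both subtract the degree-two Taylor polynomial at the midpoint (on which the corrected rule is exact) and bound the two remainder contributions separately, which after rescaling amounts to the same computation $\tfrac{1}{192}+\tfrac{1}{96}=\tfrac{1}{64}$. The only difference is cosmetic --- the paper first normalizes to $[-1,1]$ and then scales back, while you work directly on $[a,b]$ and phrase the Taylor subtraction as exactness of the error functional on quadratics.
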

\begin{proof}
We first assume that $[a,b]=[-1,1]$. Let $S$ be the quantity to be estimated in 
this lemma. Expanding with Taylor series
\[
f(x)=f(0)+f'(0)x+\frac12 f''(0)x^2+\frac16 f^{(3)}(\zeta_x)x^3.
\]
\[
f''(x)=f''(0)+f^{(3)}(\eta_x)x,
\]
then the quantity to be estimated is
\[
\left|\int_{-1}^1\frac12 
f''(0)x^2+\frac{1}{6}f^{(3)}(\zeta_x)x^3\,dx-\frac{2f''(0)}{6}-\frac{1}{6}\int_{-1
}^1f^{(3)}(\eta_x)x\,dx\right|\le \frac{\|f^{(3)}\|_\infty}{4}.
\]
For general $[a,b]$ one can apply the previous result to $g:[-1,1]\to\R$ 
given by $g(t)=f((a+b)/2+t(b-a)/2)$.
\end{proof}


\begin{bibdiv}

\begin{biblist}

\bib{Facility}{article}{
	AUTHOR = {Beltr\'{a}n, C.},
	TITLE = {A facility location formulation for stable polynomials and
		elliptic {F}ekete points},
	JOURNAL = {Found. Comput. Math.},
	VOLUME = {15},
	YEAR = {2015},
	NUMBER = {1},
	PAGES = {125--157},
	ISSN = {1615-3375},
}

\bib{Diamond}{article}{
author = {Beltrán, C.},
author = {Etayo, U.},
title = {The Diamond ensemble: a constructive set of points with small 
logarithmic energy},
journal = {To appear}
}

\bib{FLH}{article}{
AUTHOR = {Beltr\'{a}n, C.},
AUTHOR = {Pardo, L. M.},
TITLE = {Fast linear homotopy to find approximate zeros of polynomial
systems},
JOURNAL = {Found. Comput. Math.},
VOLUME = {11},
YEAR = {2011},
NUMBER = {1},
PAGES = {95--129},
ISSN = {1615-3375},
}

\bib{BS18}{article}{
	AUTHOR = {B\'{e}termin, L.},
	AUTHOR = {Sandier, E.},
	TITLE = {Renormalized energy and asymptotic expansion of optimal
		logarithmic energy on the sphere},
	JOURNAL = {Constr. Approx.},
	VOLUME = {47},
	YEAR = {2018},
	NUMBER = {1},
	PAGES = {39--74},
	ISSN = {0176-4276},
}

\bib{BlCuShSm}{book}{
	AUTHOR = {Blum, L.},
	AUTHOR = {Cucker, F.},
	AUTHOR = {Shub, M.},
	AUTHOR = {Smale,
		S.},
	TITLE = {Complexity and real computation},
	NOTE = {With a foreword by Richard M. Karp},
	PUBLISHER = {Springer-Verlag, New York},
	YEAR = {1998},
	PAGES = {xvi+453},
	ISBN = {0-387-98281-7},
}

\bib{Brauchart2008}{article}{
	AUTHOR = {Brauchart, J. S.},
	TITLE = {Optimal logarithmic energy points on the unit sphere},
	JOURNAL = {Math. Comp.},
	VOLUME = {77},
	YEAR = {2008},
	NUMBER = {263},
	PAGES = {1599--1613},
	ISSN = {0025-5718},
}

\bib{BHS2012b}{article}{
	AUTHOR = {Brauchart, J. S. },
	AUTHOR = { Hardin, D. P. },
	AUTHOR = { Saff, E. B.},
	TITLE = {The next-order term for optimal {R}iesz and logarithmic energy asymptotics on the sphere},
	BOOKTITLE = {Recent advances in orthogonal polynomials, special functions, and their applications},
	SERIES = {Contemp. Math.},
	VOLUME = {578},
	PAGES = {31--61},
	PUBLISHER = {Amer. Math. Soc., Providence, RI},
	YEAR = {2012},
}

\bib{BuCu}{article}{
	AUTHOR = {B\"{u}rgisser, P.},
	AUTHOR={Cucker, F.},
	TITLE = {On a problem posed by {S}teve {S}male},
	JOURNAL = {Ann. of Math. (2)},
	VOLUME = {174},
	YEAR = {2011},
	NUMBER = {3},
	PAGES = {1785--1836},
	ISSN = {0003-486X},
}

\bib{Condition}{book}{
	AUTHOR = {B\"{u}rgisser, P.},
AUTHOR={Cucker, F.},
	TITLE = {Condition},
	SERIES = {Grundlehren der Mathematischen Wissenschaften [Fundamental
		Principles of Mathematical Sciences]},
	VOLUME = {349},
	NOTE = {The geometry of numerical algorithms},
	PUBLISHER = {Springer, Heidelberg},
	YEAR = {2013},
	PAGES = {xxxii+554},
}

\bib{Dubickas}{article}{
	AUTHOR = {Dubickas, A.},
	TITLE = {On the maximal product of distances between points on a
		sphere},
	JOURNAL = {Liet. Mat. Rink.},
	VOLUME = {36},
	YEAR = {1996},
	NUMBER = {3},
	PAGES = {303--312},
	ISSN = {0132-2818},
}

\bib{LosRusos}{book}{
	AUTHOR = {Gradshteyn, I. S.},
	AUTHOR ={Ryzhik, I. M.},
	TITLE = {Table of integrals, series, and products},
	EDITION = {Eighth},
	NOTE = {Translated from the Russian,
		Translation edited and with a preface by Daniel Zwillinger and
		Victor Moll,
		Revised from the seventh edition},
	PUBLISHER = {Elsevier/Academic Press, Amsterdam},
	YEAR = {2015},
	PAGES = {xlvi+1133},
	ISBN = {978-0-12-384933-5},
}

\bib{Lairez}{article}{
	AUTHOR = {Lairez, P.},
	TITLE = {A deterministic algorithm to compute approximate roots of
		polynomial systems in polynomial average time},
	JOURNAL = {Found. Comput. Math.},
	VOLUME = {17},
	YEAR = {2017},
	NUMBER = {5},
	PAGES = {1265--1292},
	ISSN = {1615-3375},
}

\bib{LyuSod}{unpublished}{
author={Lyubarski\u{\i}, Y. I.},
AUTHOR = {Sodin, M.},
TITLE = {Analogues of sine type for  convex domains},
JOURNAL = {Preprint no. 17, Inst. Low Temperature Phys. Eng., Ukrainian Acad. 
Sci., Kharkov (Russian).},
YEAR = {1986},
NUMBER = {1},
}

\bib{LyuSeip}{article}{
   author={Lyubarski\u{\i}, Y. I.},
   author={Seip, K.},
   title={Sampling and interpolation of entire functions and exponential
   systems in convex domains},
   journal={Ark. Mat.},
   volume={32},
   date={1994},
   number={1},
   pages={157--193},
   issn={0004-2080},
}

\bib{RSZ94}{article}{
	AUTHOR = {Rakhmanov, E. A.},
	AUTHOR = {Saff, E. B.},
	AUTHOR = {Zhou, Y. M.},
	TITLE = {Minimal discrete energy on the sphere},
	JOURNAL = {Math. Res. Lett.},
	VOLUME = {1},
	YEAR = {1994},
	NUMBER = {6},
	PAGES = {647--662},
	ISSN = {1073-2780},
}

\bib{SS12}{article}{
	AUTHOR = {Sandier, E.},
	AUTHOR = { Serfaty, S.},
     TITLE = {From the {G}inzburg-{L}andau model to vortex lattice problems},
JOURNAL = {Comm. Math. Phys.},
VOLUME = {313},
YEAR = {2012},
NUMBER = {3},
PAGES = {635--743},
ISSN = {0010-3616},
}

\bib{BezI}{article}{
	AUTHOR = {Shub, M.},
	author = {Smale, S.},
	TITLE = {Complexity of {B}\'{e}zout's theorem. {I}. {G}eometric aspects},
	JOURNAL = {J. Amer. Math. Soc.},
	VOLUME = {6},
	YEAR = {1993},
	NUMBER = {2},
	PAGES = {459--501},
	ISSN = {0894-0347},
}

\bib{BezII}{incollection}{
	AUTHOR = {Shub, M.},
	author = {Smale, S.},
	TITLE = {Complexity of {B}ezout's theorem. {II}. {V}olumes and
		probabilities},
	BOOKTITLE = {Computational algebraic geometry ({N}ice, 1992)},
	SERIES = {Progr. Math.},
	VOLUME = {109},
	PAGES = {267--285},
	PUBLISHER = {Birkh\"{a}user Boston, Boston, MA},
	YEAR = {1993},
}

\bib{BezIII}{article}{
	AUTHOR = {Shub, M.},
	author = {Smale, S.},
	TITLE = {Complexity of {B}ezout's theorem. {III}. 
		{C}ondition number
		and packing},
	NOTE = {Festschrift for Joseph F. Traub, Part I},
	JOURNAL = {J. Complexity},
	VOLUME = {9},
	YEAR = {1993},
	NUMBER = {1},
	PAGES = {4--14},
	ISSN = {0885-064X},
}

\bib{Smale}{incollection}{
	AUTHOR = {Smale, S.},
	TITLE = {Mathematical problems for the next century},
	BOOKTITLE = {Mathematics: frontiers and perspectives},
	PAGES = {271--294},
	PUBLISHER = {Amer. Math. Soc., Providence, RI},
	YEAR = {2000},
}

\bib{Wagner}{article}{
	AUTHOR = {Wagner, G.},
	TITLE = {On the product of distances to a point set on a sphere},
	JOURNAL = {J. Austral. Math. Soc. Ser. A},
	VOLUME = {47},
	YEAR = {1989},
	NUMBER = {3},
	PAGES = {466--482},
	ISSN = {0263-6115},
}

\end{biblist}
\end{bibdiv}
\end{document}